\documentclass{article}
\usepackage{amssymb, amsmath}
\usepackage{amsthm}
\newtheorem{theorem}{Theorem}
\usepackage{mathtools}

\newtheorem{lemma}{Lemma}
\newtheorem{proposition}{Proposition}
\newtheorem{conjecture}{Conjecture}
\newtheorem{corollary}{Corollary}
\usepackage{bbm}
\usepackage[margin=1in]{geometry}
\usepackage{tikz}

\newcommand{\PetersenVertices}{
  \foreach \i/\ang in {1/90,2/18,3/-54,4/-126,5/162}{
    \node (v\i) at (\ang:2.35) {};
  }
  \foreach \i/\ang in {6/54,7/-18,8/-90,9/-162,10/126}{
    \node (v\i) at (\ang:1.25) {};
  }
}

\title{Nordhaus-Gaddum upper bounds for graph connectivity parameters}

\author{Mark Kempton\footnote{Brigham Young University, Provo, UT, \emph{mkempton@mathematics.byu.edu}}, 
Sibi Muthuprakash\footnote{Brigham Young University, Provo, UT, \emph{jmcbp@mathematics.byu.edu}}, 
and Xavier Zaitzeff\footnote{Brigham Young University, Provo, UT, \emph{xz224@byu.edu}}}

\date{}

\begin{document}

\maketitle

\begin{abstract}
    We examine upper bounds on Nordhaus-Gaddum type problems for parameters related to graph connectivity.  Our main result is that for a graph $G$ on $n$ vertices where both $G$ and its complement $G^c$ are connected, then the sum of the  algebraic connectivity of $G$ and the algebraic connectivity of $G^c$ cannot exceed $n-3$ (with finitely many exceptions with a small number of vertices).  We obtain similar results for the isoperimetric number of a graph, and explore similar Nordhaus-Gaddum type questions for the Cheeger constant and the second eigenvalue of the normalized Laplacian matrix. 
\end{abstract}

\noindent {\bf Keywords:} Algebraic connectivity; Laplacian spectrum; normalized Laplacian; isoperimetric number; Cheeger constant; Nordhaus-Gaddum questions.\\

\noindent {\bf AMS Subject Classification:} 05C50, 15A18.

\section{Introduction}

The combinatorial Laplacian matrix of a graph $G$ is defined as $L(G) = D(G) - A(G)$ where $D(G)$ is the diagonal degree matrix and $A(G)$ is the adjacency matrix of G. As the Laplacian matrix is positive semi-definite, the eigenvalues of the Laplacian matrix are typically ordered $0 = \mu_1(G) \leq \mu_2(G) \dots \leq \mu_n(G)$. 
The Laplacian has several nice properties that make it useful for applications such as community detection, measuring connectivity, image segmentation, modeling diffusion in a graph, and other areas. Its smallest eigenvalue $\mu_1(G)=0$ with the all-ones vector the associated eigenvector. In fact, the multiplicity of the eigenvalue 0 is the number of connected components in the graph. Thus, $\mu_2(G)$, the second smallest eigenvalue of the Laplacian, is strictly greater than 0 if and only if G is connected. As such, $\mu_2(G)$ is called the \emph{algebraic connectivity} of $G$ and can be thought of as a measure of how well-connected the graph is (see, for instance, the seminal work of Fiedler in \cite{fiedler}). Generally, a small $\mu_2(G)$ implies the graph is poorly connected and a large $\mu_2(G)$ that it is well connected. These properties and more have inspired significant research into the eigenvalues of the Laplacian and the algebraic connectivity specifically.

We will analyze the algebraic connectivity from the perspective of a Nordhaus-Gaddum problem. A Nordhaus-Gaddum type problem for a given graph invariant involves finding an upper and/or lower bound on a quantity (such as a sum or a product) involving the graph invariant of the graph and its complement simultaneously. For a  simple undirected graph $G$, the complement, $G^c$, is defined as the graph with the same vertices as the original and whose edges are exactly the edges not in the original graph. By the properties of the Laplacian, if $G$ and $G^c$ are both connected then both $\mu_2(G)$ and $\mu_2(G^c)$ are nonzero.
The study of Nordhaus-Gaddum type questions traces back to the work of Nordhaus and Gaddum in 1956 \cite{nordhaus1956complementary} where they showed that for the chromatic number of a graph, $\chi(G)$,  
\[2\sqrt{n} \leq \chi(G) +\chi(G^c) \leq n+1\]
\[n \leq \chi(G)\chi(G^c) \leq \left(\frac{n+1}{2}\right)^2.\]
Since then, several Nordhaus-Gaddum inequalities for many different kinds of graph invariants and graph parameters have been studied. See \cite{aouchiche2013survey} for a survey of such Nordhaus-Gaddum type questions.  See in particular \cite{barrett2022new,faught2024nordhaus,kim2026bounds} for studies of Nordhaus-Gaddum questions (mostly lower bounds) for parameters related to graph connectivity.

 
 When we know a Nordhaus-Gaddum type bound, the associated extremal graphs tell us what graph structure optimizes the given invariant for the graph and its complement at the same time, which can have value in real world graph applications. For example, consider the hypothetical scenario of two communications networks which must remain disjoint. These networks want to optimize the speed to get from one point to another, but cannot share edges. 
 The optimal structure of the respective systems to this problem would maximize connectivity of graphs that are complementary. The Nordhaus-Gaddum formulation offers intuition on how these parallel structures interact, and how improving one network might necessarily be detrimental to the other. An upper bound on a Nordhaus-Gaddum problem related to graph connectivity informs how strong these networks can be simultaneously. 

Closely related to the Nordhaus-Gaddum problem for algebraic connectivity is the problem of the Laplacian spread. The Laplacian spread is defined as $s(G) = \mu_n(G) - \mu_2(G)$. The Laplacian spread conjecture was posited in 2011 by Zhai et al.~\cite{zhai2011laplacian} and then proved in 2021 by Einollahzadeh and Karkhaneei \cite{einollahzadeh2021lower}. It states that for any simple undirected graph of order $n \geq 2$,
\begin{equation}
    \mu_n(G)-\mu_2(G) \leq n-1 \label{eq:1}
\end{equation}
with equality if and only if $G$ or $G^c$ is isomorphic to the join of an isolated vertex with a disconnected graph. Einollahzadeh and Karkhaneei's approach involved relating the eigenvalues of a graph with its complement by the well-known relationship
\begin{equation}
    \mu_i(G) = n - \mu_{n-i+2}(G^c). \label{eq:2}
\end{equation} Using this, we can rewrite the Laplacian spread conjecture in the form of a Nordhaus-Gaddum type problem.
\begin{equation}
    \mu_2(G)+\mu_2(G^c) \geq 1 \label{eq:3}
\end{equation} 
This gives a lower bound for the Nordhaus-Gaddum problem we are interested in. The primary purpose of this paper is to address upper bounds. 

While the algebraic connectivity of a graph is our primary concern in this paper, we will also look at other parameters related to connectivity. One such graph invariant is the isoperimetric number. Considering the communication networks from before, the networks should be resistant to failures. Knowing how many edges are required to fail before the graph becomes disconnected is another measure of connectivity. The isoperimetric number of $G$ quantifies this and is defined as 
\[
i(G) = \min_{X \subset V}{\frac{|\partial(X)|}{\min\{|{X}|,|\overline{X}|\}}}
\] where $\partial(X)$ denotes the ``boundary" of $X$; i.e.~the set of edges with one endpoint in $X$ and the other not in $X$. 
Much like $\mu_2(G)$, if $G$ is disconnected then $i(G)=0$. This definition can be thought of as the sparsest cut of a graph, telling us the minimum number of edges required to cut the graph into two disconnected components. A high $i(G)$ implies many edges need to be removed and a low $i(G)$ indicates a small number of edges to cut. Mohar \cite{MOHAR1989274} proved the following connection between the algebraic connectivity of a graph and the isoperimetric number: 
\begin{equation}
    \frac{i(G)^2}{2\Delta}\leq \mu_2(G) \leq 2i(G). \label{eq:3}
\end{equation}
From this, we can analyze Nordhaus-Gaddum inequalities on the isoperimetric number and apply them to the algebraic connectivity. 

Two other parameters we will consider are the second eigenvalue of the normalized Laplacian matrix, and the Cheeger constant. The normalized Laplacian is defined as $\mathcal{L} = D^{-1/2}LD^{-1/2}$. We denote its second smallest eigenvalue as $\lambda_2(G)$ much like we did with the Laplacian. The normalized Laplacian also has the property of $\lambda_1(G) = 0$ and the multiplicity of 0 is the number of components of $G$. This definition normalizes the weights of the edges based on the degree of the corresponding vertices. Similarly, the Cheeger constant $h(G)$, a normalized version of the isoperimetric number, provides a like bound on $\lambda_2(G)$ associated with the normalized Laplacian \cite{Fanchungbook}: 
\begin{equation}\label{eq:cheegerineq}
\frac{h(G)^2}{4} \leq \lambda_2 \leq 2h(G). 
\end{equation}
We note that work in \cite{faught2024nordhaus} gives lower bounds on Nordhaus-Gaddum questions relating to $\lambda_2$, $h(G)$, and $i(G)$.



The purpose of this paper is to prove upper bounds on the noted graph invariants in the context of Nordhaus-Gaddum formulations and identify the extremal graph families. The paper will be structured as follows. First we will show for the isoperimetric number $i(G) + i(G^c) \leq \frac{n}{2}$ in general, and $i(G)+i(G^c) \leq \frac{n}{2} - \frac{2}{n}$ when the graph and its complement are both connected. We will then use these results and other techniques to analyze the algebraic connectivity of $G$ and $G^c$ to find upper bounds. We observe $\mu_2(G) + \mu_2(G) \leq n$ in general, and then prove that, with finitely many exceptions, $\mu_2(G) + \mu_2(G) \leq n-3$ given both $G$ and $G^c$ are connected. We will identify the extremal families of graphs for $\mu_2(G)$ and $\mu_2(G^c)$ and prove how $k$-regularity gives strong connectivity for the Nordhaus-Gaddum problem. Following we conjecture some possible Nordhaus-Gaddum upper bounds for the Cheeger constant and $\lambda_2(G)$. The extremal graphs will be identified and their structures addressed.




\section{Isoperimetric Number}
    
    We begin analyzing the isoperimetric number by gathering some experimental data.  In Figure \ref{fig:isoplots}, we plot the points $(i(G), i(G^c))$ for all graphs (up to isomorphism) on $n$ vertices for $n=4, 5, 6, 7, 8, 9$. Since $i(G)=0$ means $G$ is disconnected, points along the $x$-axis or $y$-axis mean that $G$ or $G^c$ is disconnected. In the plots, we observe a clear grouping of graphs along the axes apart from the interior of the plots. We first give a general upper bound on $i(G)$. Throughout the paper, we will denote the complete graph on $n$ vertices as $K_n$, and the empty graph (that is, the graph with only isolated vertices and no edges) on $n$ vertices as $E_n$. 
    \begin{figure}[h]
    \includegraphics[width=0.3\linewidth]{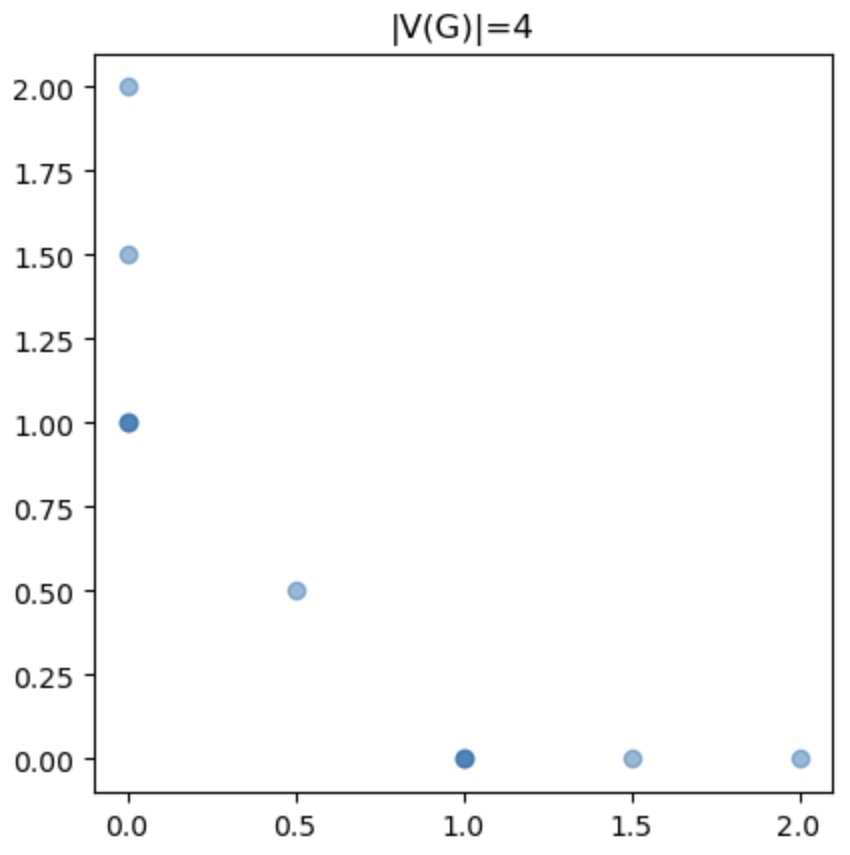}
    \centering
    \includegraphics[width=0.3\linewidth]{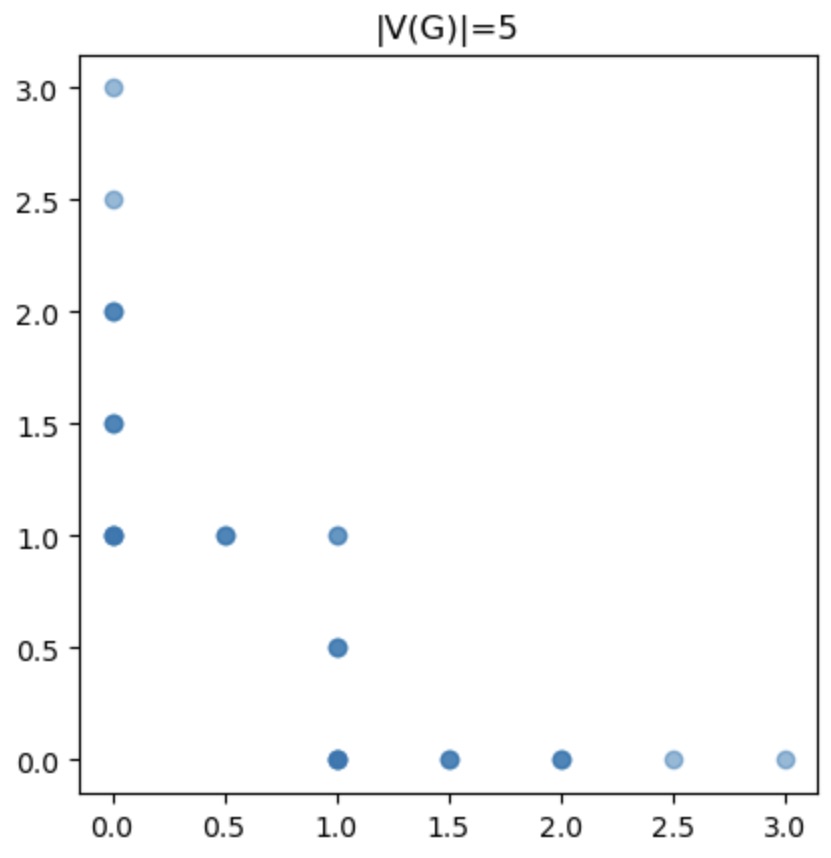}
    \includegraphics[width=0.3\linewidth]{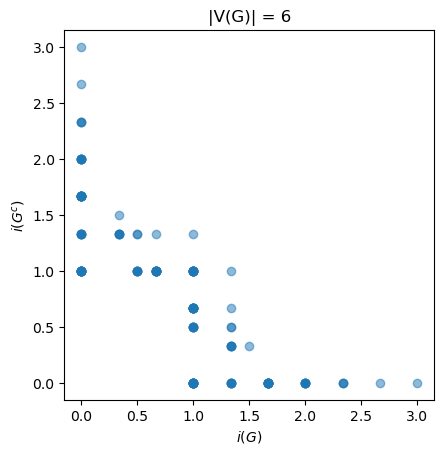}

    \includegraphics[width=0.3\linewidth]{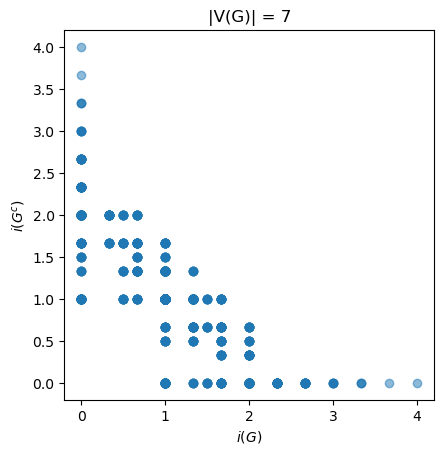}
    \centering
    \includegraphics[width=0.3\linewidth]{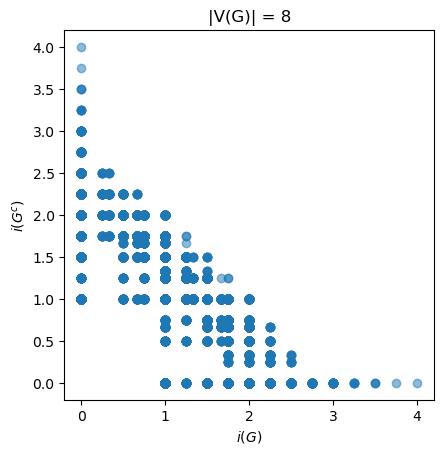}
    \includegraphics[width=0.3\linewidth]{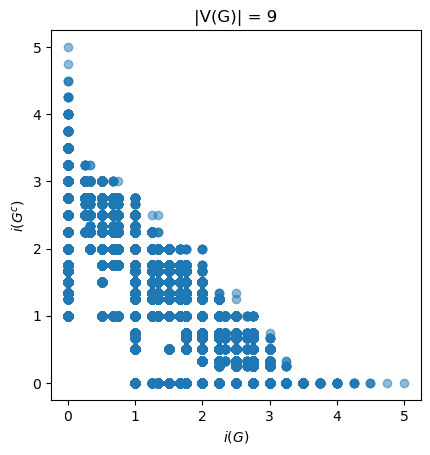}
    \caption{Plots of the isoperimetric number for all graphs up to isomorphism on 4-9 vertices. Note $i(G)+i(G^c) \geq 1$ unless $G=P_4$ (see \cite{faught2024nordhaus}).}\label{fig:isoplots}
    \end{figure}

    \begin{lemma}
        $i(G) \leq\lceil\frac{n}{2}\rceil$ with equality if and only if $G=K_n$. \label{iso:1}
    \end{lemma}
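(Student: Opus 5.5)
The plan is to compare $G$ with $K_n$ on one well-chosen cut. Since $i(G)$ is a minimum over subsets, it suffices to exhibit a single $X\subset V$ whose ratio $|\partial(X)|/\min\{|X|,|\overline X|\}$ is at most $\lceil n/2\rceil$, and strictly smaller when $G\neq K_n$. Throughout I assume $n\ge 2$, so that proper nonempty cuts exist.

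\textbf{Step 1 (the value for $K_n$).} Take $|X|=k$ with $1\le k\le n/2$. In $K_n$ every pair with one endpoint in $X$ and one in $\overline X$ is an edge, so $|\partial(X)|=k(n-k)$. Hence the ratio is $n-k$. This is minimized by taking $k$ as large as possible, namely $k=\lfloor n/2\rfloor$. The minimum value is $n-\lfloor n/2\rfloor=\lceil n/2\rceil$, so $i(K_n)=\lceil n/2\rceil$, and this gives one direction of the equality case.

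\textbf{Step 2 (the upper bound for general $G$).} Fix any $X$ with $|X|=\lfloor n/2\rfloor$. Since $E(G)\subseteq E(K_n)$, the boundary of $X$ in $G$ is a subset of its boundary in $K_n$. Therefore $|\partial_G(X)|\le \lfloor n/2\rfloor\lceil n/2\rceil$, and the ratio for $X$ is at most $\lceil n/2\rceil$. Taking the minimum over all $X$ gives $i(G)\le\lceil n/2\rceil$.

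\textbf{Step 3 (strictness when $G\ne K_n$).} If $G\neq K_n$, there is a non-edge $uv$. Because $n\ge2$, we have $1\le\lfloor n/2\rfloor\le n-1$. So we may choose $X$ with $|X|=\lfloor n/2\rfloor$, $u\in X$ and $v\notin X$. The pair $uv$ is then counted in the $K_n$ boundary of $X$ but is missing from $G$. Hence $|\partial_G(X)|\le\lfloor n/2\rfloor\lceil n/2\rceil-1$, and
\[
i(G)\le \left\lceil \frac n2\right\rceil-\frac{1}{\lfloor n/2\rfloor}<\left\lceil \frac n2\right\rceil .
\]

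There is no real obstacle here. The only point needing care is the choice of $X$ in Step 3: it must simultaneously have the optimal size $\lfloor n/2\rfloor$ and separate the endpoints of the missing edge. This is possible precisely because $X$ and $\overline X$ are both nonempty.
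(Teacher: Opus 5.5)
Your proof is correct and follows the same route as the paper's: evaluate a single cut of size roughly $n/2$, bound its boundary by the count $|X||\overline{X}|$ attained in $K_n$, and use a non-edge crossing the cut to get strict inequality when $G\neq K_n$. Your version is more careful than the paper's in three places: you handle odd $n$ via $\lfloor n/2\rfloor$, you explicitly choose $X$ to separate the endpoints of the non-edge (the paper asserts the strict drop for an arbitrary half-size $X$), and you justify that $k=\lfloor n/2\rfloor$ is the optimal cut size for $K_n$.
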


    \begin{proof}
        $(\implies)$Assume $G\neq K_n$. Let $X \subset V $ and $|X| \leq \lceil\frac{n}{2}\rceil$. Then the edge set is bounded above by a value that is less than the product of the cardinalities of $X$ and $\bar X$. That is, $\partial (X,\bar X)<|X||\bar X|-\alpha$, where $\alpha>0$. We choose a set $X$ that has cardinality $\frac{n}{2}$. Then we see that $i(G)=\min\{\frac{\partial(X,\bar X)}{\min{|X||\bar X|}}\}$=$\frac{\frac{n}{2}.\frac{n}{2}-\alpha}{\frac{n}{2}}$=$\frac{n}{2}-\alpha<\frac{n}{2}$. \\
        $(\impliedby)$ Let $G=K_n$. The optimal cut will be when the cardinality is exactly at half of the degree of the graph. So we have that $\partial(X,\bar X)=|X||\bar X|=\frac{n}{2}.\frac{n}{2}=\frac{n^2}{4}$. Dividing both sides by $|X|=\frac{n}{2}$ yields $i(G)=\frac{n}{2}$.
    \end{proof}

    Lemma \ref{iso:1} tells us an absolute bound on isoperimetric number of a graph. We see the graph most resistant to edge cuts is the complete graph $K_n$. Now we connect the isoperimetric number of a graph with the isoperimetric number of its complement.

    \begin{lemma}
        If $i(G) \geq \alpha$ for some $\alpha \geq 0$, then $i(G^c)\leq \lceil\frac{n}{2}\rceil-\alpha$. \label{iso:2}
    \end{lemma}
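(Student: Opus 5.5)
The plan is to test the complementary pair on a single common cut. For any $X \subset V$, every pair $\{u,v\}$ with $u\in X$, $v\in \bar X$ is an edge of exactly one of $G$ and $G^c$. Hence
\[
|\partial_G(X)| + |\partial_{G^c}(X)| = |X|\,|\bar X|.
\]
Take $X$ with $|X| = \lfloor n/2\rfloor$, so that $\min\{|X|,|\bar X|\} = \lfloor n/2\rfloor$ and $|\bar X| = \lceil n/2\rceil$. Dividing by $\lfloor n/2\rfloor$ gives
\[
\frac{|\partial_G(X)|}{\lfloor n/2\rfloor} + \frac{|\partial_{G^c}(X)|}{\lfloor n/2\rfloor} = \left\lceil \frac{n}{2}\right\rceil .
\]

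Next, bound each term against the isoperimetric numbers. The hypothesis $i(G)\ge\alpha$ gives $|\partial_G(X)|/\lfloor n/2\rfloor \ge i(G) \ge \alpha$, because $i(G)$ is a minimum over all cuts. In the same way, $i(G^c) \le |\partial_{G^c}(X)|/\lfloor n/2\rfloor$. Combining the two,
\[
i(G^c) \le \left\lceil \tfrac{n}{2}\right\rceil - \frac{|\partial_G(X)|}{\lfloor n/2\rfloor} \le \left\lceil \tfrac{n}{2}\right\rceil - \alpha,
\]
which is exactly the claim. A cleaner way to present the same argument is to prove $i(G)+i(G^c)\le \lceil n/2\rceil$ directly from the identity, and then substitute $i(G)\ge\alpha$.

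The main point needing care is the choice of cut size. The ceiling appears precisely because $|X||\bar X|/\min\{|X|,|\bar X|\}=\max\{|X|,|\bar X|\}$, and this quantity is minimized at the balanced cut, where it equals $\lceil n/2\rceil$. Beyond that, the only check is the degenerate case $n=1$, where no proper nonempty cut exists. For $n\ge 2$ the balanced cut exists, and the argument does not use Lemma \ref{iso:1} at all. As a consistency check, the case $\alpha=0$ recovers Lemma \ref{iso:1} applied to $G^c$.
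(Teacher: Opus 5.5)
Your proof is correct and follows the same route as the paper. Both arguments fix a balanced cut $X$, use that the crossing pairs split between $G$ and $G^c$ so that $|\partial_G(X)|+|\partial_{G^c}(X)|=|X||\bar X|$, bound the $G$-part below by $\alpha|X|$, and divide by $|X|$. Your choice $|X|=\lfloor n/2\rfloor$ handles both parities at once and keeps the inequalities non-strict, which is cleaner than the paper's separate even/odd treatment. The paper's even case asserts a strict inequality that actually fails, e.g.\ for $G=E_n$ with $n$ even and $\alpha=0$, where $i(G^c)=n/2$.
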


    \begin{proof}
        Without loss of generality, assume $|X|=\frac{n}{2}$ since we want to minimize the isoperimetric ratio. Now, we set $i(G) = \alpha$ for some $\alpha \geq 0$. Then $i(G)=\frac{\partial(X,\bar X)}{|X|} \geq \alpha$. Then we see that $\partial(X,\bar X)\geq\alpha |\bar{X}| \geq \frac{n\alpha}{2}$. We denote the edge set on the $G^c$ as $\partial_{G^c}(X, \bar X)$. Since $|X|=\frac{n}{2}, |\bar X|=\frac{n}{2}$, the maximum number of edges possible between $X$ and its complement will be $\frac{n^2}{4}$. Then we see that $|\partial_{G}(X,\bar X)| < \frac{n^2}{4}-\frac{n\alpha}{2}$, as $|\partial_{G^c}(X,\bar X)|>\frac{n\alpha}{2}$. Since $|\partial_{G^c}(X,\bar X)|\leq\frac{n^2}{4}-\frac{n\alpha}{2}=\frac{n}{2}(\frac{n}{2}-\alpha)=|X|(\frac{n}{2}-\alpha)$, dividing both sides by $|X|$ gives $\frac{|\partial_{G^c}(X,\bar X)|}{|X|}\leq\frac{n}{2}-\alpha$, from which we see that i($G^c$)$<$$\frac{n}{2}-\alpha$.  
        In the case when $n$ is odd, apply the same reasoning with $|X|=\frac{n-1}{2}$ and for $i(G)> \alpha$, $i(G^c)<\frac{n+1}{2}-\alpha$.
    \end{proof}

Lemma \ref{iso:2} provides intuition in how this invariant pair interacts. As an edge is lost in a graph, the complement gains an edge. The same cut that would disconnect a graph would have to cut all the remaining edges possible between the two vertex subsets found in the complement. We can see this visually in Figure \ref{fig:isoplots}. Now combining Lemma \ref{iso:1} with Lemma \ref{iso:2}, we prove a Nordhaus-Gaddum upper bound on $i(G)$ as a main result for this section.

    \begin{theorem}\label{thm:iso}
        $i(G)+i(G^c)\leq\lceil\frac{n}{2}\rceil$ with equality if and only if $G = K_n$ or $G= E_n$.
    \end{theorem}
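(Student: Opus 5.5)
The plan is to fix a balanced vertex set and bound both isoperimetric numbers by the same cut. Let $k=\lfloor n/2\rfloor$, so $n-k=\lceil n/2\rceil$. For any $X\subset V$ with $|X|=k$, every pair $\{x,y\}$ with $x\in X$, $y\in\bar X$ is an edge of exactly one of $G$ and $G^c$. Hence
\[
|\partial_G(X)|+|\partial_{G^c}(X)|=k(n-k).
\]
Since $\min\{|X|,|\bar X|\}=k$, the definition of $i$ gives
\[
i(G)+i(G^c)\le \frac{|\partial_G(X)|}{k}+\frac{|\partial_{G^c}(X)|}{k}=n-k=\left\lceil \tfrac n2\right\rceil .
\]
This is the upper bound. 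It is cleaner than Lemmas \ref{iso:1} and \ref{iso:2}, although it is the same complementary-cut idea.

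For the "if" direction I will compute directly. In $K_n$, a set of size $j\le n/2$ has boundary $j(n-j)$, so its ratio is $n-j$. This is minimized at $j=k$, giving $i(K_n)=\lceil n/2\rceil$. Also $i(E_n)=0$. So both $K_n$ and $E_n$ attain equality.

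For the "only if" direction, suppose equality holds. Then the displayed inequality is tight for \emph{every} $k$-subset $X$. Since $|\partial_G(X)|/k\ge i(G)$ and $|\partial_{G^c}(X)|/k\ge i(G^c)$ always hold, tightness forces $|\partial_G(X)|=k\,i(G)$ for every $X$ with $|X|=k$. In other words, all balanced cuts of $G$ have the same size $c$. The remaining work, and the main obstacle, is to show that this forces $G\in\{K_n,E_n\}$. I would first dispose of small cases by hand: for $n=2$ the only graphs are $K_2$ and $E_2$; for $n=3$, $k=1$ and equal singleton cuts just say $G$ is regular, so $G\in\{K_3,E_3\}$.

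For $n\ge4$ we have $2\le k\le n-2$, and I would use a swapping argument. Take $u\in X$ and $v\in\bar X$, and set $Y=X-u+v$. Counting, the change in cut size is
\[
|\partial(Y)|-|\partial(X)|=\bigl(d(v)-2d_{Y}(v)\bigr)-\bigl(d(u)-2d_X(u)\bigr)
\]
plus a correction for the edge $uv$, where $d_S(w)$ denotes the number of neighbours of $w$ in $S$. Setting this to zero for all admissible choices, and comparing two choices of $X$ that differ only in where a third vertex $w$ sits (possible because $k\ge2$ and $n-k\ge2$), isolates the term $[w\sim u]-[w\sim v]$. This should show that every vertex $w\ne u,v$ is adjacent to $u$ if and only if it is adjacent to $v$. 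That makes adjacency among distinct vertices constant across pairs, so $G$ is $K_n$ or $E_n$. The obstacle is doing this bookkeeping carefully, including the $uv$ edge term and the parity of $n$. As a fallback, I would average $|\partial_G(X)|$ over all $k$-subsets: each edge crosses with the same probability, so the constant cut value equals a fixed multiple of $e(G)$. I would then compare this with the cut obtained after moving a single vertex to conclude that $G$ is regular, and iterate the argument on the common neighbourhoods.
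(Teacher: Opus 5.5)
Your upper bound and the ``if'' direction are correct. The bound is the same complementary-cut idea as Lemma~\ref{iso:2}, done more cleanly. The gap is in the ``only if'' direction: the claim that all balanced cuts having the same size forces $G\in\{K_n,E_n\}$ is false for every even $n\ge 4$.

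For the star $K_{1,n-1}$, take any set $X$ with $|X|=n/2$. If $X$ contains the centre, the cut consists of the edges to the $n/2$ leaves in $\bar X$. If it does not, the cut consists of the edges from its own $n/2$ leaves. Either way every balanced cut has exactly $n/2$ edges, and the same then holds for the complement $K_{n-1}\cup K_1$. Neither graph is regular, so your averaging fallback cannot conclude regularity either.

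Your swap computation shows exactly where the argument breaks. With $W=V\setminus\{u,v\}$, $S=X\setminus\{u\}$ and $f(w)=[w\sim u]-[w\sim v]$, one gets
\[
|\partial(Y)|-|\partial(X)|=2\sum_{w\in S}f(w)-\sum_{w\in W}f(w),
\]
and the $uv$ term cancels. Since $|S|=k-1$ is fixed, you can never move a single vertex $w$; you must exchange it with some $w'$, which only yields $f(w)=f(w')$. Hence $f\equiv c$ on $W$, and what remains is $(2k-n)c=0$. For odd $n\ge 5$ this forces $c=0$ and your twin argument works. For $n=2k$ the equation is vacuous, so $c=\pm1$ survives: one of $u,v$ is adjacent to all of $W$ and the other to none of it. That is precisely the star configuration.

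To close the gap you must use more of the equality hypothesis than constancy of balanced cuts, namely that $i(G)$ and $i(G^c)$ are each attained by balanced cuts. Here is one way to do it.
\begin{itemize}
\item Suppose equality holds and $G$ is disconnected. Then $i(G^c)=n-k$, and for every balanced $X$ you get $n-k\le |\partial_{G^c}(X)|/k=n-k-|\partial_G(X)|/k$. So no balanced cut of $G$ contains an edge, and separating the two ends of any edge shows $G=E_n$.
\item Symmetrically, if $G^c$ is disconnected then $G=K_n$.
\item If some pair has $c=\pm1$, call the vertex adjacent to all of $W$ the ``full'' one and the other the ``empty'' one. When $u\not\sim v$, the empty vertex is isolated in $G$; when $u\sim v$, the full vertex is isolated in $G^c$. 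So one of the two previous cases applies.
\item Otherwise $c=0$ for every pair, and your twin argument finishes as planned.
\end{itemize}
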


    \begin{proof}
        We first set $i(G)=\alpha$. Then by Lemma \ref{iso:2}, we see that $i(G^c)\leq\frac{n}{2}-\alpha$. Combining the two conditions, we get $i(G)+i(G^c)<\alpha+\frac{n}{2}-\alpha=\frac{n}{2}$. This changes slightly when $n$ is odd, $i(G)+i(G^c)<\frac{n+1}{2}$. By Lemma \ref{iso:1}, the equality holds when G is $K_n$.
        $E_n$ is the complement of $K_n$ and so apply the same argument but swapping $G$ and $G^c$.
    \end{proof}

    This Nordhaus-Gaddum upper bound is universal for all graphs. The plots in Figure \ref{fig:isoplots} suggest that a stronger bound holds if both $G$ and $G^c$ are connected. 
    
    \begin{theorem}\label{thm:iso_connected}
        If $G$ and $G^c$ are connected, then $i(G)+i(G^c)<\lceil\frac{n}{2}\rceil-\frac{2}{n}$.
    \end{theorem}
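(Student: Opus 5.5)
The plan is to bound both $i(G)$ and $i(G^c)$ using balanced cuts, and to exploit the fact that, on a fixed balanced bipartition, the cut sizes in $G$ and $G^c$ add up to a constant. Let $k=\lfloor n/2\rfloor$. For any $X\subset V$ with $|X|=k$, every pair $\{x,y\}$ with $x\in X$, $y\in\bar X$ is an edge of exactly one of $G$, $G^c$. Writing $c_G(X)=|\partial_G(X)|$, we therefore have
\[
c_G(X)+c_{G^c}(X)=k(n-k)=k\lceil n/2\rceil .
\]
Since $\min\{|X|,|\bar X|\}=k$, taking the same set $X$ in both definitions gives $i(G)+i(G^c)\le \lceil n/2\rceil$. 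This recovers Theorem \ref{thm:iso}, and the task is to improve it.

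The key idea is to use \emph{different} balanced sets for $G$ and for $G^c$. Choose $X$ minimizing $c_G$ over all $k$-subsets, and $Y$ minimizing $c_{G^c}$, which is the same as maximizing $c_G$. Then
\[
i(G)+i(G^c)\le \frac{c_G(X)+c_{G^c}(Y)}{k}=\lceil n/2\rceil-\frac{c_G(Y)-c_G(X)}{k}.
\]
The cut sizes are integers. So if $c_G$ is not constant on $k$-subsets, the gap $c_G(Y)-c_G(X)$ is at least $1$, and we gain at least $1/k$. For odd $n$ this gives $1/k=2/(n-1)>2/n$, which already yields the strict inequality. For even $n$ it gives $1/k=2/n$, which yields only $\le\lceil n/2\rceil-2/n$.

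The next step is to show that $c_G$ is constant on $k$-subsets only when $G\in\{K_n,E_n\}$; this is excluded because both $G$ and $G^c$ are connected. I would do this with a swap argument. Exchanging $u\in X$ with $w\in\bar X$ changes $c_G$ by an amount determined by $\deg u$, $\deg w$, the adjacency of $u$ and $w$, and their neighbourhoods inside $X$ and $\bar X$. If this change vanishes for every choice of $X,u,w$, one can vary $X$ while keeping $u,w$ fixed. This shows that every vertex has the same number of neighbours on each side, up to the $uw$ term, which forces all pairs to be simultaneously adjacent or all non-adjacent. The small cases $n\le 3$ are handled directly: there, connectivity of both $G$ and $G^c$ is impossible or trivial.

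I expect the main obstacle to be strictness for even $n$, that is, ruling out $i(G)+i(G^c)=n/2-2/n$ exactly. Equality would require three things simultaneously: $\max c_G-\min c_G=1$ on balanced sets, $i(G)$ attained by the balanced minimizer, and $i(G^c)$ attained by the balanced maximizer. My first attempt would be to show that a swap changes $c_G$ by an even amount whenever it changes it at all. Then a nonconstant $c_G$ would have a range of at least $2$, giving a gain of at least $4/n$. If the parity claim fails, I would instead use an unbalanced set, such as a minimum-degree vertex of $G$ or of $G^c$, to get a further strict improvement in the near-extremal configurations where $c_G$ takes only two adjacent values.
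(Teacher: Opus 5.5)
\textbf{Gap: strictness for even $n$.} Your framework is sound, and it makes rigorous what the paper's argument lacks. Bounding $i(G)$ and $i(G^c)$ by two \emph{different} balanced sets, and using $c_G+c_{G^c}=k\lceil n/2\rceil$ together with integrality, is exactly what justifies the paper's unexplained ``$-1$''. The paper also assumes the minimizer for $i(G)$ is balanced, and in the end only obtains $\le\frac{n}{2}-\frac{2}{n}$. For odd $n$ your gain $1/k>2/n$ gives the strict inequality.

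For even $n$, however, strictness is not proved, and the step you plan to use is false. Swapping $u\in X$ with $w\notin X$ changes the cut by $\deg w-\deg u+2d_X(u)-2d_X(w)+2a_{uw}$. Hence $c_G(X)\equiv\sum_{x\in X}\deg x \pmod 2$, and swaps between vertices of different degree parity produce odd changes. For example, in $P_4$ (connected, with connected complement) $c(\{1,2\})=1$ and $c(\{1,4\})=2$. Your unbalanced-set fallback is not yet an argument.

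Your lemma ``$c_G$ is constant on $k$-sets only for $K_n,E_n$'' also fails for even $n$: every balanced cut of the star $K_{1,n-1}$ has exactly $n/2$ edges. So connectivity of $G$ and $G^c$ must be used inside the swap argument, not only afterwards.

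\textbf{How to close it.} Your target of a range of at least $2$ is true, but you reach it through a second difference rather than parity.
Suppose every swap changes $c_G$ by at most $1$. If $\deg u\equiv\deg w \pmod 2$, the change $\Delta(X;u,w)$ is even, hence $0$, for every balanced $X$ with $u\in X$, $w\notin X$. Compare $X$ with $X-y+z$, where $y\in X$, $z\notin X$ and $y,z\notin\{u,w\}$. This gives $0=\Delta(X;u,w)-\Delta(X-y+z;u,w)=2[(a_{uy}-a_{wy})-(a_{uz}-a_{wz})]$. So $a_{uy}-a_{wy}=\epsilon$ is constant over $y\notin\{u,w\}$; this needs only $k\ge 2$ and $n-k\ge 2$.

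If $\epsilon=\pm1$, one of $u,w$ is adjacent to all other $n-2$ vertices and the other to none of them. That gives a vertex of degree $n-1$ or $0$, which the hypothesis forbids. So vertices of equal degree parity are twins. Each of the at most two parity classes is then a clique or a coclique and a module. The edges between the classes are therefore all present or all absent, so $G$ or $G^c$ is disconnected, a contradiction.

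Hence some swap changes $c_G$ by at least $2$. Your inequality then becomes $i(G)+i(G^c)\le\lceil n/2\rceil-2/\lfloor n/2\rfloor<\lceil n/2\rceil-2/n$ for all $n\ge 4$. This sharper bound is attained by $P_4$ and $C_5$.
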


    \begin{proof}
        Let \(i(G) = \alpha\). Then 
        $i(G) = \frac{|\partial(X, \bar X)|}{|X|} = \alpha$, then $|\partial(X,\bar X)|=\frac{n\alpha}{2}$.
        In the complement, since $G$ and \(G^c\) must be connected the boundary set must be at least one less edge than the complete boundary set. So
        $\partial(X, \bar X)$ $\leq$ $\frac{n^2}{4}-1-\frac{n\alpha}{2}$
        Thus, 
         $i(G^c) = \frac{\partial(X, \bar X)|}{|X|}\leq\frac{\frac{n^2}{4}-1-\frac{n\alpha }{2}}{\frac{n}{2}}=\frac{n}{2}-\frac{2}{n}-\alpha$
         So $i(G) + i(G^c) \leq \frac{n}{2} - \frac{2}{n}$.
    \end{proof}

    The data displayed in Figure \ref{fig:isoplots} does seem to indicate that it may be possible to improve Theorem \ref{thm:iso_connected} slightly.  We leave it as an open question if there is a tighter bound when $G$ and $G^c$ are connected.

\section{Algebraic Connectivity}

\begin{figure}[h]
    \includegraphics[width=0.3\linewidth]{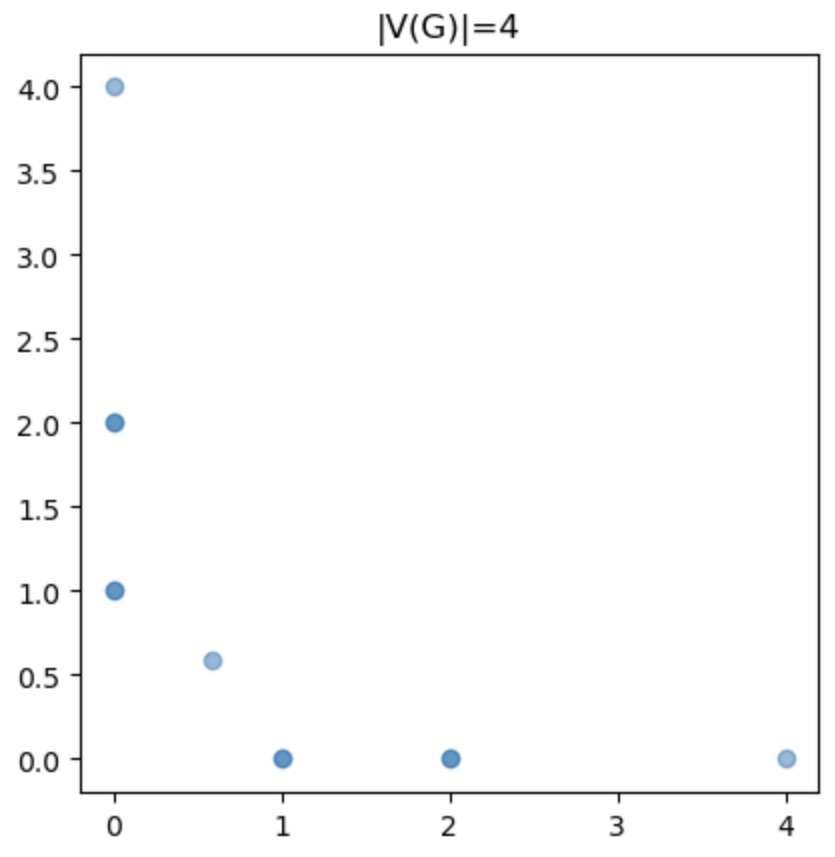}
        \centering
    \includegraphics[width=0.3\linewidth]{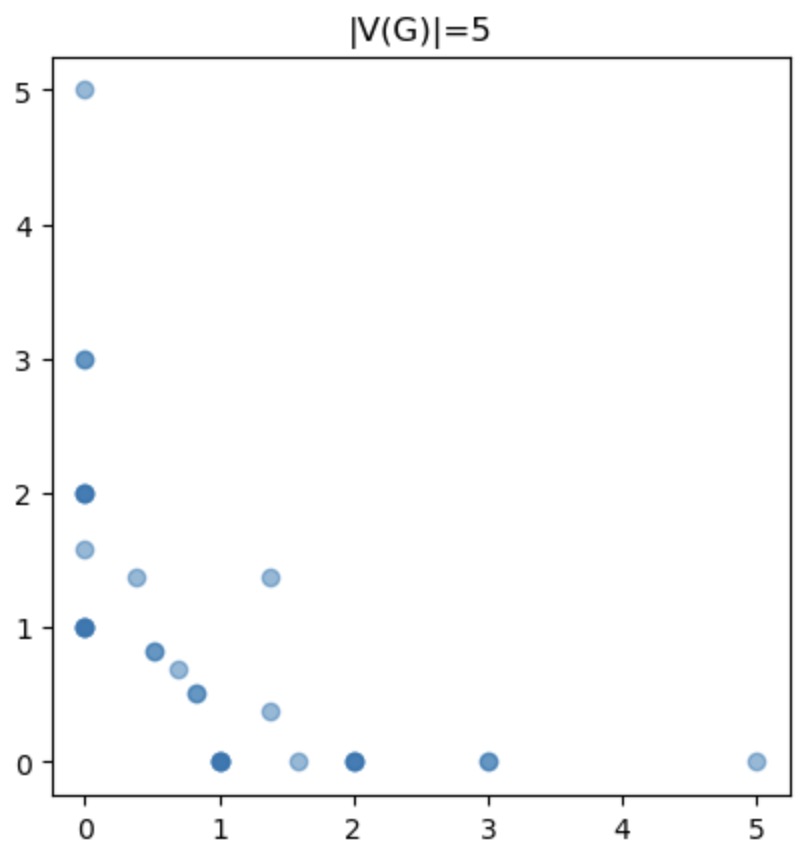}
    \includegraphics[width=0.3\linewidth]{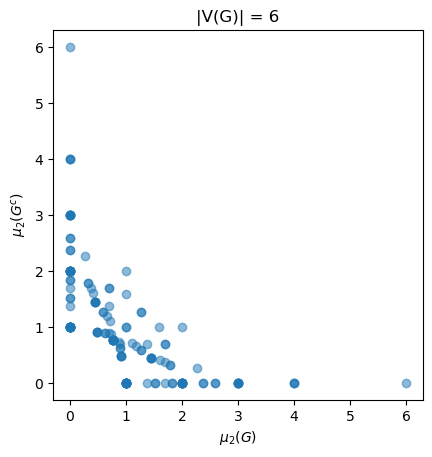}
    
    \includegraphics[width=0.3\linewidth]{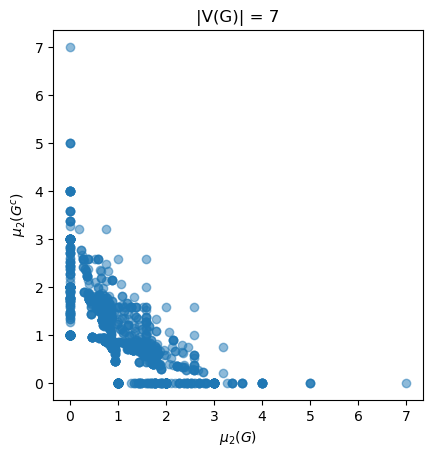}
    \centering
    \includegraphics[width=0.3\linewidth]{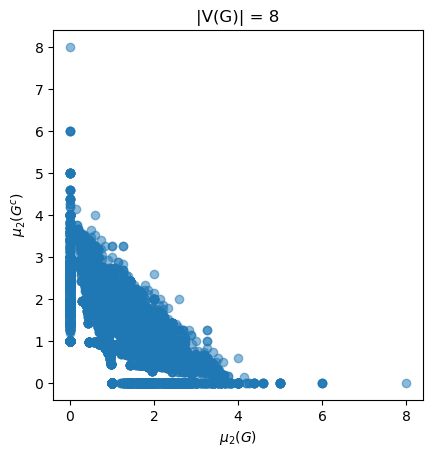}
    \includegraphics[width=0.3\linewidth]{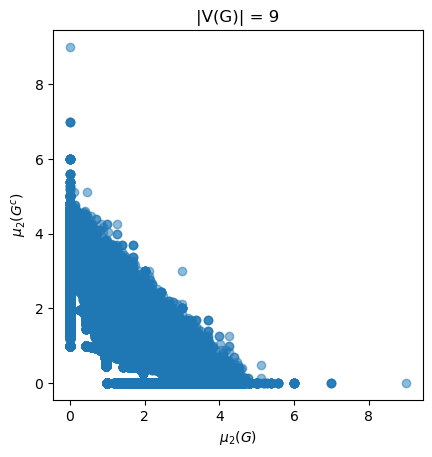}
    \caption{Plots of algebraic connectivity up to isomorphism for $n=4,5,6,7,8,9$}\label{fig:mu_plots}
\end{figure}

We now turn our attention to bounding the sum of the algebraic connectivity of a graph and its complement. Like previously, we plotted the $(\mu_2(G), \mu_2(G^c))$ over the same choices of $n$. By \eqref{eq:3}, $\mu_2(G)$ is bounded above and below by $i(G)$ so we expect parallels between optimal graph structures seen for $i(G)$ and those for $\mu_2(G)$.  From Theorem \ref{thm:iso} and equation (\ref{eq:3}), it is immediate that the sum of the algebraic connectivity of $G$ and $G^c$ will be upper bounded by around $n$, but this can be proven using much more elementary means which we do below. 

Throughout this section, we will use $\Delta$ to refer to the maximum degree of the graph and $\delta$ as the minimum degree. It is a well established that $\mu_n \geq\Delta+1$ and $\mu_2 \leq \delta$ for $G \neq K_n$.

\begin{lemma}[see, for instance, \cite{brouwer2011spectra} Propositions 1.7.2 and 3.9.3]\label{lem:degree}
     If $G\neq E_n$, $\mu_n\geq\Delta+1$, with strict inequality if $G$ is connected and $\Delta\neq n-1$. If $G\neq K_n$, then $\mu_2(G) \leq \delta(G)$. 
\end{lemma}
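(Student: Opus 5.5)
We prove the two halves of Lemma \ref{lem:degree} separately, both from the Rayleigh quotient $x^T L x = \sum_{uv\in E}(x_u-x_v)^2$. For $\mu_2(G)\le \delta(G)$ we use the complement relation \eqref{eq:2}, which makes this the statement $\mu_n(G^c)\ge n-\delta(G)=\Delta(G^c)+1$, i.e.\ the first half applied to $G^c$. The hypothesis $G\neq K_n$ is exactly $G^c\neq E_n$. So the main work is the first half: if $G\neq E_n$ then $\mu_n(G)\ge \Delta+1$.

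For the first half, let $v$ be a vertex of degree $\Delta\ge 1$ and $S=N(v)$. Define the test vector $x$ by $x_v=\Delta$, $x_u=-1$ for $u\in S$, and $x_w=0$ otherwise. Then $x^Tx=\Delta^2+\Delta$, and
\[
x^TLx=\Delta(\Delta+1)^2+\sum_{u\in S}\bigl(\deg(u)-1-e_S(u)\bigr)\ge \Delta(\Delta+1)^2 .
\]
Here the first term comes from the edges $vu$, and the sum counts edges from $u\in S$ to vertices outside $S\cup\{v\}$; each such edge contributes $1$. Edges inside $S$ contribute $0$. Hence the Rayleigh quotient is at least $\Delta+1$, and $\mu_n\ge\Delta+1$ because $\mu_n$ is the maximum of the Rayleigh quotient.

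Equality requires two things: that $x$ be an eigenvector, and that no edge leave $S\cup\{v\}$. If $G$ is connected and $\Delta\ne n-1$, some vertex lies outside $S\cup\{v\}$, and connectivity gives an edge from $S$ to it. That makes the sum positive, so the inequality is strict. This is the step that needs care: we must check that the quotient strictly exceeds $\Delta+1$, not merely that $x$ fails to be an eigenvector. The displayed sum makes this direct.

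For the second half, apply the first half to $G^c\ne E_n$. Then $\mu_n(G^c)\ge\Delta(G^c)+1=n-1-\delta(G)+1=n-\delta(G)$, and by \eqref{eq:2} with $i=2$, $\mu_2(G)=n-\mu_n(G^c)\le\delta(G)$. Since these are standard facts, the written proof could also just cite \cite{brouwer2011spectra}, but the argument above is self-contained.
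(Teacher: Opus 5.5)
Your proof is correct. The paper gives no argument of its own here and only cites Brouwer--Haemers, so your proposal works as a self-contained replacement, along standard lines.

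The first half is the usual Grone--Merris argument. Your test vector is exactly the top Laplacian eigenvector of the star $K_{1,\Delta}$ formed by $v$ and its neighbours. Writing the Rayleigh quotient as $\Delta+1+e\bigl(S,V\setminus(S\cup\{v\})\bigr)/(\Delta(\Delta+1))$ makes the strict case immediate. One small wording point: connectivity gives an edge from $S\cup\{v\}$ to its nonempty complement, and that edge must start in $S$ because $N(v)=S$. It need not end at the particular outside vertex you named, but that is all you actually use.

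For $\mu_2\le\delta$, the usual textbook route is Fiedler's inequality $\mu_2(G)\le\kappa(G)\le\delta(G)$, using $N(v)$ as a separating set when $G$ is not complete. Your route instead applies the first half to $G^c$ via \eqref{eq:2}. This is shorter and lets one computation handle both parts. The cost is that you get only the degree bound, not the stronger vertex-connectivity bound that the Fiedler route provides.
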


\begin{lemma}[see \cite{CvetkovicDoobSachs1980}]\label{lem:multn-1} The only simple graph on $n$ vertices with a nonzero eigenvalue of multiplicity $n-1$ is the complete graph. 
\end{lemma}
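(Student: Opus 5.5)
The statement is a classical fact attributed to \cite{CvetkovicDoobSachs1980}. The natural proof goes through the Laplacian, since that is the setting of this section. The claim being used is that if $L(G)$ has a nonzero eigenvalue $\mu$ of multiplicity $n-1$, then $G=K_n$. My plan is to pin down the whole matrix $L(G)$ from its spectral decomposition.

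Since $L(G)$ is real symmetric with $L\mathbf{1}=0$, the eigenvalue $0$ has eigenvector $\mathbf{1}$. The remaining $n-1$ eigenvalues must then all equal $\mu\neq 0$, with eigenspace exactly $\mathbf{1}^\perp$. It follows that
\[
L(G)=\mu\left(I-\tfrac{1}{n}J\right),
\]
where $J$ is the all-ones matrix, because the two sides agree on $\mathbf{1}$ and on $\mathbf{1}^\perp$.

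Next I read off the entries. The off-diagonal entries of $L(G)$ are $-\mu/n$, and they must lie in $\{0,-1\}$. Since $\mu\neq 0$, every off-diagonal entry equals $-1$, so every pair of vertices is adjacent and $G=K_n$. As a consistency check, $\mu=n$, and the diagonal entries $\mu(1-1/n)=n-1$ match the degrees.

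For the adjacency version of the statement the argument is the same. One has $A=\theta I+(\text{rank one})$, so $A-\theta I=\lambda vv^T$ for some vector $v$. The zero diagonal of $A$ forces $\lambda v_i^2=-\theta$ for all $i$, so all $|v_i|$ are equal. Because the off-diagonal entries lie in $\{0,1\}$ and are not all zero, $\theta=-1$ and $A=J-I$; this gives $K_n$ again, with $n\ge 2$.

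The only delicate step is justifying that the eigenspace of $\mu$ is exactly $\mathbf{1}^\perp$. This follows from the orthogonality of eigenspaces of a symmetric matrix, so I do not expect any real obstacle.
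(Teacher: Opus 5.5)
Your proof is correct. The paper gives no argument for Lemma~\ref{lem:multn-1}; it only cites \cite{CvetkovicDoobSachs1980}, so your route differs in being self-contained.

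Your key step is that $\mathbf{1}$ spans the $0$-eigenspace, so the $\mu$-eigenspace is exactly $\mathbf{1}^\perp$. This forces $L(G)=\mu\bigl(I-\tfrac1nJ\bigr)$, and the entrywise check gives $\mu=n$ and $G=K_n$ at once. That is precisely the Laplacian form in which the lemma is used in Proposition~\ref{mu:1}.

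Your computation actually gives slightly more than the lemma. The identity $L(G)=\mu_2\bigl(I-\tfrac1nJ\bigr)$ holds whenever $\mu_2=\cdots=\mu_n$, including $\mu_2=0$, where it gives $L(G)=0$ and so $G=E_n$. Hence it settles the whole equality case of Proposition~\ref{mu:1}, including the parenthetical $E_n$, which the lemma as stated (a \emph{nonzero} eigenvalue) does not cover. Since the lemma does not name the matrix, your adjacency paragraph usefully covers the other natural reading.

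Two small points in that adjacency paragraph:
\begin{itemize}
\item For $n=2$, $K_2$ also has the eigenvalue $\theta=+1$ with multiplicity $1=n-1$. So $\theta=-1$ is forced only for $n\ge 3$; the conclusion $A=J-I$ is unaffected.
\item The cleanest reason every off-diagonal entry equals $1$ is that $|A_{ij}|=|\lambda|\,|v_i|\,|v_j|=|\theta|\neq 0$ for all $i\neq j$. Your ``not all zero'' works only because of this equal-magnitude fact.
\end{itemize}
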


\begin{proposition}
Let $G$ be any graph with $n$ vertices.  Then $\mu_2(G)+\mu_2(G^c)\leq n$ with equality if and only if $G=K_n$ or $G=E_n$. 
\label{mu:1}
\end{proposition}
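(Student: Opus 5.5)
We use the complement relation \eqref{eq:2} with $i=2$, namely $\mu_2(G^c) = n - \mu_n(G)$. This gives
\[
\mu_2(G)+\mu_2(G^c) = n - \bigl(\mu_n(G)-\mu_2(G)\bigr).
\]
So the inequality is equivalent to $\mu_2(G)\le \mu_n(G)$, which is immediate from the ordering of the Laplacian eigenvalues. Equality holds exactly when $\mu_2(G)=\mu_n(G)$, that is, when all eigenvalues $\mu_2,\dots,\mu_n$ coincide.

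For the equality characterization, first check the two claimed cases directly. For $K_n$ the spectrum is $0$ and $n$ with multiplicity $n-1$, so $\mu_2(K_n)=n$. Its complement $E_n$ has the zero Laplacian, so $\mu_2(E_n)=0$, and the sum is $n$. The case $G=E_n$ is symmetric.

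Conversely, suppose $\mu_2(G)=\mu_n(G)=c$. If $c=0$, then all Laplacian eigenvalues vanish. Since $L(G)$ is symmetric, this forces $L(G)=0$, so $G$ has no edges and $G=E_n$. If $c>0$, then $c$ is a nonzero eigenvalue of multiplicity $n-1$, and Lemma \ref{lem:multn-1} gives $G=K_n$.

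Alternatively, for $c>0$ one could avoid Lemma \ref{lem:multn-1}. A nonzero $\mu_2$ means $G$ is connected, and then $\operatorname{tr} L = (n-1)c = 2|E|$. Combining $\mu_2\le\delta$ for $G\ne K_n$ with $\mu_n\ge \Delta+1$ from Lemma \ref{lem:degree} gives $\delta\ge\Delta+1$, a contradiction; hence $G=K_n$.

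There is no real obstacle here. The only point needing care is the equality case: we must make sure the degenerate case $c=0$ is handled separately, since Lemma \ref{lem:multn-1} applies only to nonzero eigenvalues.
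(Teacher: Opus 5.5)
Your proof is correct and is the paper's argument: rewrite the sum via \eqref{eq:2} as $n-(\mu_n(G)-\mu_2(G))$, then characterize equality through Lemma \ref{lem:multn-1}. Your separate treatment of the case $c=0$ makes explicit what the paper covers only parenthetically. Your alternative degree-based argument for $c>0$ via Lemma \ref{lem:degree} is also valid, though the trace identity in it is never used.
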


\begin{proof}

 Note that by (\ref{eq:2}), $\mu_2(G)+\mu_2(G^c)=\mu_2(G)+n-\mu_n(G) = n-(\mu_n(G)-\mu_2(G))\leq n$ as $\mu_n(G)-\mu_2(G)\geq0$.  We get equality if and only if $\mu_n(G)=\mu_2(G)$ which means there is an eigenvalue of multiplicity $n-1$.  By Lemma \ref{lem:multn-1}, this occurs if and only if $G$ is $K_n$ (or $E_n$).
\end{proof}


 

The above theorem demonstrates the largest sum as well as the associated graphs that achieve it. We can still see from the plots there exist tighter bounds given some restrictions. First, there is a large gap between what is achieved by $K_n$ and the next largest sum. We now consider the next largest sum.


\begin{proposition}
If $G\neq K_n$ and if $G^c$ is disconnected, then $\mu_2(G)+\mu_2(G^c) \leq n-2$ with equality if and only if $G^c$ is a union of disjoint edges and disjoint vertices.  Moreover, if $G^c$ is disconnected and not a union of disjoint edges and vertices, then $\mu_2(G)+\mu_2(G^c)\leq n-3$.
\end{proposition}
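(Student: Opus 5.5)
If $G^c$ is disconnected, then $\mu_2(G^c)=0$, so the quantity to bound is simply $\mu_2(G)$. By \eqref{eq:2}, $\mu_2(G) = n-\mu_n(G^c)$. The plan is to bound $\mu_n(G^c)$ from below using Lemma \ref{lem:degree} applied to $G^c$. Since $G\neq K_n$, we have $G^c\neq E_n$, so $G^c$ has at least one edge and $\Delta(G^c)\ge 1$. Lemma \ref{lem:degree} then gives $\mu_n(G^c)\ge \Delta(G^c)+1\ge 2$, hence $\mu_2(G)+\mu_2(G^c)\le n-2$.

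For the equality case, equality forces $\Delta(G^c)=1$ and $\mu_n(G^c)=2$. A graph with maximum degree $1$ is exactly a disjoint union of edges and isolated vertices. Conversely, if $G^c$ is such a union with at least one edge, its Laplacian spectrum is $\{0,2\}$ with appropriate multiplicities, so $\mu_n(G^c)=2$. This can be seen directly, since the spectrum of a disjoint union is the union of the spectra and $L(K_2)$ has eigenvalues $0,2$. Such a $G^c$ is also disconnected whenever $n\ge 3$, so it satisfies the hypothesis. This gives the characterization.

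For the "moreover" part, assume $G^c$ is disconnected and not a union of disjoint edges and vertices. Then $\Delta(G^c)\ge 2$, and Lemma \ref{lem:degree} gives $\mu_n(G^c)\ge \Delta(G^c)+1\ge 3$. Hence $\mu_2(G)=n-\mu_n(G^c)\le n-3$. We cannot use the strict-inequality clause of Lemma \ref{lem:degree} here, because $G^c$ is disconnected, but the non-strict bound is all we need.

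The main subtlety is the bookkeeping around the hypotheses. We must check that $G\neq K_n$ is exactly what guarantees $G^c\neq E_n$, so that Lemma \ref{lem:degree} applies to $G^c$. We must also dispose of the degenerate small cases, such as $n=2$, where $G^c=K_2$ is connected and so never arises under the hypothesis. Apart from that, the argument is a direct combination of \eqref{eq:2} and Lemma \ref{lem:degree}.
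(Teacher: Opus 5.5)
Your proposal is correct and is essentially the paper's argument in complemented form. The paper bounds $\mu_2(G)\le\delta(G)\le n-2$ directly, while you bound $\mu_n(G^c)\ge\Delta(G^c)+1$ and transfer it via \eqref{eq:2}; since $\Delta(G^c)+1=n-\delta(G)$ this is the same inequality, and your equality and ``moreover'' cases match the paper's step for step.
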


\begin{proof}
Since $G^c$ is disconnected, $\mu_2(G^c)=0$. However, since $G$ is not complete, the highest possible value for the minimum degree $\delta$ is $n-2$, so by Lemma \ref{lem:degree}, $\mu_2(G)\leq n-2$. Thus $\mu_2(G)+\mu_2(G^c) \leq n-2$.  We can see that if equality holds, then the minimum degree of $G$ must be $n-2$, and so the maximum degree of $G^c$ is 1, and thus the graph must be as described.  Furthermore, if $G$ and $G^c$ are as claimed, we can directly compute that $\mu_2(G)=n-2$ using (\ref{eq:2}). Finally, if $G^c$ has any vertex of degree 2 or more, then the minimum degree of $G$ is at most $n-3$ and the claim follows.
\end{proof}




As with the isoperimetric number, the data in Figure \ref{fig:lapl_plots} seems to indicate that a stronger bound holds if both $G$ and $G^c$ are connected.  From Theorem \ref{thm:iso_connected} and equation (\ref{eq:3}), we immediately get an upper bound of around $n-\frac4n$.  However, we can do much better, as we will show below.

To prove our main result, we require a general bound for the Laplacian spread and a bound for $k$-regular graphs (graphs for which each vertex has degree $k$, so $\Delta = \delta$).  For our general bound, Andrade et al. in 2015 proved a bound on the Laplacian spread \cite{ANDRADE2015494}. 

\begin{lemma}[Theorem 4.2 of \cite{ANDRADE2015494}]\label{lem:spread_bound}
Let $G$ be a graph of order $n\geq 3$ with at least one edge, such that $\delta = \delta(G)$ and $\Delta=\Delta(G)$. Then
\begin{equation}
    \mu_n-\mu_2 \geq \sqrt{(\Delta - \delta)^2 + 2(\Delta+\delta) - \left(2\frac{\Delta+1}{n}\right)^2\left(\frac{\delta-1}{\Delta+1}n + 1\right)}.
    \label{eq:5}
\end{equation}

In particular, if G is $k$-regular, then 
\[\mu_n-\mu_2 \geq \sqrt{4k - \left(2\frac{k+1}{n}\right)^2 \left(\frac{k-1}{k+1}n + 1\right)}.\]
\end{lemma}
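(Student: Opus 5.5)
The plan is to bound the spread from below by a two-dimensional Rayleigh quotient (Courant--Fischer) argument. The regular case then follows by substituting $\Delta=\delta=k$: the term $(\Delta-\delta)^2$ vanishes, $2(\Delta+\delta)=4k$, and the last term becomes $\left(2\frac{k+1}{n}\right)^2\left(\frac{k-1}{k+1}n+1\right)$.

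\textbf{Reduction to a $2\times 2$ pencil.} Since $L\mathbf{1}=0$, the eigenvalues of $L$ restricted to $\mathbf{1}^\perp$ are exactly $\mu_2\le\cdots\le\mu_n$. Let $W\subseteq\mathbf{1}^\perp$ be any $2$-dimensional subspace, and let $\theta_1\le\theta_2$ be the eigenvalues of the compression of $L$ to $W$. By Cauchy interlacing (or Courant--Fischer), $\mu_2\le\theta_1\le\theta_2\le\mu_n$, so
\[
\mu_n-\mu_2\ \ge\ \theta_2-\theta_1 .
\]
I would choose a vertex $u$ of degree $\Delta$ and a vertex $v$ of degree $\delta$. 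Set $f_u=e_u-\frac1n\mathbf{1}$ and $f_v=e_v-\frac1n\mathbf{1}$, and take $W=\mathrm{span}\{f_u,f_v\}$. The hypotheses $n\ge 3$ and $u\ne v$ make $f_u,f_v$ independent; if $\Delta=\delta$, pick any two distinct vertices.

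\textbf{The two matrices.} Because $L\mathbf{1}=0$, the compression is the pencil $(A,B)$ with
\[
A=\begin{pmatrix}\Delta & -a\\ -a & \delta\end{pmatrix},\qquad
B=\begin{pmatrix}1-\frac1n & -\frac1n\\ -\frac1n & 1-\frac1n\end{pmatrix},
\]
where $a=a_{uv}\in\{0,1\}$. The values $\theta_1,\theta_2$ are the roots of $\det(A-\theta B)=0$. Hence
\[
(\theta_2-\theta_1)^2=\operatorname{tr}(B^{-1}A)^2-4\,\frac{\det A}{\det B},\qquad \det B=\frac{n-2}{n}.
\]
I would expand this explicitly in $\Delta,\delta,a,n$, and then minimize over $a\in\{0,1\}$ to get a bound valid in both cases.

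\textbf{Main obstacle.} The expected obstacle is matching the resulting expression exactly to the stated form
\[
(\Delta-\delta)^2+2(\Delta+\delta)-\left(2\tfrac{\Delta+1}{n}\right)^2\left(\tfrac{\delta-1}{\Delta+1}n+1\right).
\]
The pure choice $f_u,f_v$ may give a slightly different, possibly stronger or weaker, quantity. If it does not match, I would try two refinements. The first is to replace $f_v$ by the projection of a vector supported on $N[v]$, or to use test vectors that weight neighbourhoods, e.g.\ $e_u-\frac{1}{\Delta+1}\mathbf{1}_{N[u]}$. This is suggested by the factors $\Delta+1$ and $\delta-1$ in the formula, and the edge-count between neighbourhoods is what would bring in $\delta-1$. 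The second is to bound the cross term $f_u^{T}Lf_v$ using $|N(u)\cap N(v)|\ge \Delta+\delta-n$-type counting. In either case the structure of the argument (interlacing on a well-chosen $2$-dimensional subspace of $\mathbf{1}^\perp$) stays the same; only the algebra changes. I would finish by checking that the expression under the root is nonnegative under the hypotheses.
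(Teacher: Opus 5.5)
The paper does not prove this lemma; it imports it from Theorem 4.2 of \cite{ANDRADE2015494}. Judged on its own, your proposal has a genuine gap at its core, and the gap is not about matching algebra. The compression of $L$ to $W=\mathrm{span}\{e_u-\frac1n\mathbf{1},\,e_v-\frac1n\mathbf{1}\}$ sees only $L_{uu},L_{vv},L_{uv}$. Carrying out your own computation gives
\[
(n-2)^2(\theta_2-\theta_1)^2=(n-1)^2(\Delta-\delta)^2+4\Delta\delta-4a(n-1)(\Delta+\delta)+4a^2(n-1)^2 .
\]
For a $k$-regular graph this gives $\theta_2-\theta_1=\frac{2k}{n-2}$ when $a=0$ and $\frac{2(n-1-k)}{n-2}$ when $a=1$. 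That is bounded, while the target grows like $2\sqrt{k(n-k)/n}$. For the Petersen graph your choice gives at most $1.5$, against the claimed $\sqrt{8.16}\approx 2.86$. The term $2(\Delta+\delta)$ comes from the row masses $\sum_{w\ne u}L_{uw}^2$, i.e.\ from second-order information that a Rayleigh-quotient compression onto two coordinate-type vectors cannot detect. Your refinements do not close this. Note that $e_u-\frac{1}{\Delta+1}\mathbf{1}_{N[u]}$ is just $\frac{1}{\Delta+1}Le_u$, and the natural Krylov-type arguments built from it give bounds of a different shape. For instance, $\mu_n-\mu_2\ge 2\|Lx-(x^{T}Lx)x\|$ for unit $x\perp\mathbf{1}$, with $x=(e_u-e_v)/\sqrt2$, yields $(\mu_n-\mu_2)^2\ge(\Delta-\delta)^2+2(\Delta+\delta)-4(c_{uv}+a_{uv})$, where $c_{uv}$ is the number of common neighbours. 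On the Petersen graph the right side is $8<8.16$ for every pair, so this does not imply the statement either.

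The missing idea is a shift by $J$ followed by a row-norm estimate. Let $\alpha=\frac{\Delta+1}{n}$ and $M=L+\alpha J$. Its spectrum is $\{\mu_2,\dots,\mu_n\}\cup\{\Delta+1\}$. Moreover $\Delta+1\in[\mu_2,\mu_n]$ by Lemma~\ref{lem:degree}; this is where the hypothesis of at least one edge is used, and for $K_n$ one has $\mu_2=n=\Delta+1$. So with $c=\frac{\mu_2+\mu_n}{2}$ we get $\|M-cI\|\le\frac{\mu_n-\mu_2}{2}$. Hence $\|(M-cI)e_u\|^2+\|(M-cI)e_v\|^2\le\frac{(\mu_n-\mu_2)^2}{2}$ for a vertex $u$ of degree $\Delta$ and a vertex $v\neq u$ of degree $\delta$. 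Now use three facts: $m_{uu}-m_{vv}=\Delta-\delta$, so $(m_{uu}-c)^2+(m_{vv}-c)^2\ge\frac12(\Delta-\delta)^2$; and $\sum_{w\ne u}m_{uw}^2=\Delta(1-\alpha)^2+(n-1-\Delta)\alpha^2$, similarly for $v$. These yield
\[
(\mu_n-\mu_2)^2\ \ge\ (\Delta-\delta)^2+2(\Delta+\delta)-4\alpha(\Delta+\delta)+4(n-1)\alpha^2 .
\]
Finally, $-4\alpha(\Delta+\delta)+4(n-1)\alpha^2=-\frac{4(\Delta+1)\left(n(\delta-1)+\Delta+1\right)}{n^2}=-\left(2\frac{\Delta+1}{n}\right)^2\left(\frac{\delta-1}{\Delta+1}n+1\right)$. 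This is exactly \eqref{eq:5}, and the radicand is automatically nonnegative. This shift is where the factors $\Delta+1$ and $\delta-1$ that you were trying to reverse-engineer come from.
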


We can reformulate this into the Nordhaus-Gaddum problem.
\[
\mu_2(G) + \mu_2(G^c) \leq n- \sqrt{(\Delta - \delta)^2 + 2(\Delta+\delta) - \left(2\frac{\Delta+1}{n}\right)^2\left(\frac{\delta-1}{\Delta+1}n + 1\right).}
\]

\[
\mu_2(G) + \mu_2(G^c) \leq n- \sqrt{4k - \left(2\frac{k+1}{n}\right)^2 \left(\frac{k-1}{k+1}n + 1\right).}
\]

 Likewise, we use a Laplacian spread bound on $k$-regular graphs found in \cite{GOLDBERG200668}. 
 The complement of a regular graph is also regular, specifically ($n-k-1$)-regular. So looking at $k$-regular graphs also is analyzing ($n-k-1$)-regular graphs in the complement.

\begin{lemma}[Theorem 3.6 of \cite{GOLDBERG200668}]\label{lem:spred_bound_regular}
    Let $G$ be a connected $k$-regular graph with n vertices. Then:
    \[
    \mu_n(G) - \mu_2(G) \geq 2\sqrt{\frac{k(n-k-1)}{n}}.
    \]
    \label{thr:reg_laplacian}
\end{lemma}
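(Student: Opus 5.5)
Since $G$ is $k$-regular, we have $L=kI-A$. I would therefore translate the statement into one about the adjacency eigenvalues $k=\lambda_1\geq\lambda_2\geq\cdots\geq\lambda_n$ of $A=A(G)$. The Laplacian eigenvalues are $\mu_i=k-\lambda_{n-i+2}$ for $i\geq 2$, so $\mu_2=k-\lambda_2$ and $\mu_n=k-\lambda_n$. Hence
\[
\mu_n(G)-\mu_2(G)=\lambda_2-\lambda_n ,
\]
and it suffices to bound the width of the interval $[\lambda_n,\lambda_2]$ from below. Connectedness guarantees that $\lambda_1=k$ is simple. In fact we only need that $k$ is an eigenvalue with eigenvector the all-ones vector, and we remove it from the list.

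The key information comes from two trace identities. First, $\operatorname{tr}A=0$ gives $\sum_{i\geq 2}\lambda_i=-k$. Second, $\operatorname{tr}A^2=nk$, since each row of $A$ has $k$ ones, which gives $\sum_{i\geq2}\lambda_i^2=nk-k^2=k(n-k)$. The $N=n-1$ numbers $\lambda_2,\dots,\lambda_n$ therefore have mean $m=-k/(n-1)$. Their total squared deviation from the mean is
\[
\sum_{i\geq 2}(\lambda_i-m)^2=k(n-k)-\frac{k^2}{n-1}=\frac{k\,n\,(n-k-1)}{n-1}.
\]
Here the numerator simplifies because $(n-k)(n-1)-k=n(n-k-1)$.

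Next I would apply Popoviciu's variance inequality: $N$ reals lying in an interval $[b,a]$ satisfy $\sum(x_i-m)^2\leq N(a-b)^2/4$. Taking $a=\lambda_2$ and $b=\lambda_n$, every remaining eigenvalue lies in this interval, and we obtain
\[
(\lambda_2-\lambda_n)^2\geq \frac{4kn(n-k-1)}{(n-1)^2}.
\]
Finally I would compare this with the target. Since $n^2\geq (n-1)^2$, we have $\frac{n}{(n-1)^2}\geq\frac1n$, so
\[
\lambda_2-\lambda_n\geq 2\sqrt{\frac{k(n-k-1)}{n}},
\]
which is the claimed bound (and in fact a slightly stronger one).

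The only step needing care is the variance inequality. I would prove it directly rather than cite it: each $x\in[b,a]$ satisfies $(x-b)(a-x)\geq0$. Summing this over the $N$ values and rearranging gives $\sum(x_i-m)^2\leq N(a-m)(m-b)\leq N(a-b)^2/4$, where the last step is AM--GM. Everything else is a routine trace computation.
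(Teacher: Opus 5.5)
The paper does not prove this lemma; it cites Goldberg's Theorem 3.6, so there is no internal argument to compare against. Your proof is correct and self-contained. One slip: the general index formula $\mu_i=k-\lambda_{n-i+2}$ is wrong, since it would give $\mu_2=k-\lambda_n$. With $\lambda_1\geq\cdots\geq\lambda_n$ one simply has $\mu_i=k-\lambda_i$, which is what you actually use when you write $\mu_2=k-\lambda_2$ and $\mu_n=k-\lambda_n$. Everything else checks out:
\begin{itemize}
\item the trace identities $\sum_{i\geq2}\lambda_i=-k$ and $\sum_{i\geq2}\lambda_i^2=k(n-k)$;
\item the simplification $(n-k)(n-1)-k=n(n-k-1)$;
\item your derivation of Popoviciu's inequality from $\sum(x_i-b)(a-x_i)\geq 0$;
\item the observation that connectivity is never used, only $\lambda_1=k$.
\end{itemize}

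Compared with the bare citation, your route buys a sharper inequality. You actually prove $\mu_n-\mu_2\geq \frac{2\sqrt{kn(n-k-1)}}{n-1}$. This is attained whenever the $n-1$ non-principal adjacency eigenvalues split evenly between $\lambda_2$ and $\lambda_n$. Examples are $C_5$, with spread $\sqrt5$, and conference graphs in general, with spread $\sqrt n$. So the paper's later remark that conference graphs are only ``asymptotically'' extremal for this lemma becomes exact equality in your version.

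The sharper constant would also shrink the computer search in Case~1 of the proof of Theorem~\ref{thm:alg_conn}. At the worst case $k=3$ (or $k=n-4$), your bound gives $\mu_n-\mu_2\geq 3$ as soon as $n^2-10n-3\geq 0$, i.e.\ for $n\geq 11$. The paper's version needs $n\geq 16$.
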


We reformulate this as a Nordhaus-Gaddum upper bound as 
\[
\mu_2(G)+\mu_2(G^c) \leq n - 2\sqrt{\frac{k(n-k-1)}{n}}.
\]



\begin{theorem}\label{thm:alg_conn} Let $G$ have $n$ vertices and assume that $G$ and $G^c$ are both connected. Then for $n\geq8$, \[\mu_2(G) + \mu_2(G^c) \leq n-3.\]  Moreover, equality can only hold if $G$ is regular.
\end{theorem}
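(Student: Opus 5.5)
The plan is to rewrite the statement, via (\ref{eq:2}), as a lower bound on the Laplacian spread: $\mu_2(G)+\mu_2(G^c)=n-(\mu_n(G)-\mu_2(G))$, so it suffices to show $\mu_n(G)-\mu_2(G)\geq 3$ whenever $G$ and $G^c$ are connected and $n\ge 8$, with strict inequality when $G$ is not regular. Since $G$ and $G^c$ are both connected, neither has a dominating vertex, so $1\le\delta\le\Delta\le n-2$. By Lemma \ref{lem:degree}, $\mu_n(G)>\Delta+1$ (strict, since $G$ is connected and $\Delta\neq n-1$) and $\mu_2(G)\le\delta$ (since $G\neq K_n$). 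Hence
\[
\mu_n(G)-\mu_2(G)>\Delta-\delta+1 .
\]
I would split the argument according to the value of $\Delta-\delta$.

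\textbf{Case $\Delta-\delta\geq 2$.} The display above immediately gives $\mu_n-\mu_2>3$, so the inequality is strict.

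\textbf{Case $\Delta-\delta=1$.} The display gives only $\mu_n-\mu_2>2$, so I would use Lemma \ref{lem:spread_bound}. Substituting $\Delta=\delta+1$, the expression under the square root is
\[
1+2(2\delta+1)-\frac{4(\delta+2)^2}{n^2}\Bigl(\frac{(\delta-1)n}{\delta+2}+1\Bigr)
=4\delta+3-\frac{4(\delta+2)(\delta-1)}{n}-\frac{4(\delta+2)^2}{n^2}.
\]
We need this to exceed $9$. It is a concave quadratic in $\delta$, so it suffices to check the endpoints of the allowed range $1\le\delta\le n-3$ (using $\Delta\le n-2$). At $\delta=1$ it equals $7-36/n^2$, which is smaller than $9$, so the bound fails there. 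Small $\delta$ must therefore be handled separately: either use the symmetry $G\leftrightarrow G^c$, which sends $\delta$ to $n-1-\Delta$, or use a direct argument. For example, if $\delta\le 2$ then $\mu_2\le\delta\le 2$ and $\mu_n>\Delta+1\ge 3$, which is still not enough, so I would combine this with $\mu_n\ge \Delta+1$ more cleverly. One option is the standard bound $\mu_n\ge \frac{n}{n-1}\Delta$-type improvements. Another is to note that via complementation we may assume $\delta\ge (n-2)/2$, roughly, since $\delta(G)+\delta(G^c)=n-1-(\Delta-\delta)=n-2$. This gives $\delta\ge (n-3)/2\ge 3$ for $n\ge 9$ (with $n=8$ checked directly), and in that range the quadratic exceeds $9$ once $n\ge 8$. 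The spread is invariant under complementation, so this reduction is legitimate.

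\textbf{Case $\Delta=\delta=k$ ($G$ regular).} Here I would use Lemma \ref{lem:spred_bound_regular}: $\mu_n-\mu_2\ge 2\sqrt{k(n-k-1)/n}$. By complementation I may assume $k\le (n-1)/2$. For $2\le k$ and $n\ge 8$ one checks $4k(n-k-1)\ge 9n$: the left side is concave in $k$, and at $k=2$ it gives $8(n-3)\ge 9n$, which fails. So small $k$ again needs care. For $k=1$ the graph is not connected for $n\ge 3$. For $k=2$, $G$ is a cycle $C_n$, and the spread is computed explicitly: $\mu_n(C_n)-\mu_2(C_n)\ge 4\cos^2(\pi/n)-\ldots$, which is about $4-0.6>3$ for $n\ge 8$. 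For $k\ge 3$ the inequality $4k(n-k-1)\ge 9n$ holds when $n\ge 8$ and $k\le (n-1)/2$, since at the two endpoints $12(n-4)\ge 9n$ needs $n\ge 16$. So small $n$ with $k=3$ also needs handling, for instance via Lemma \ref{lem:spread_bound} or finite verification. Since the nonregular cases give strict inequality, equality forces regularity.

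The main obstacle I expect is the bookkeeping in the nearly regular and regular cases for small degrees and for $8\le n\le 15$. There the general spread bounds are too weak, and the fallback is explicit eigenvalue knowledge (cycles) or a finite computer check.
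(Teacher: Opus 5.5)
\textbf{Overall.} Your three-case structure, the degree argument for $\Delta-\delta\ge 2$, and the regular case are essentially the paper's. In the regular case you compute cycles explicitly, use Lemma \ref{lem:spred_bound_regular} for $k\ge 3$, $n\ge 16$, and do a finite check below that. That check must cover more than $k=3$; for example $k=4$, $n=10$ also fails. The genuine gap is in the case $\Delta-\delta=1$.

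\textbf{The path family is never handled.} You complement so that $\delta\ge (n-2)/2$ and claim the quantity under the root then exceeds $9$ on the whole range. That claim is false. The range is $\lceil (n-2)/2\rceil\le\delta\le n-3$, and at the endpoint $\delta=n-3$ your own expression equals
\[
4(n-3)+3-\frac{4(n-1)(n-4)}{n}-\frac{4(n-1)^2}{n^2}=7-\frac{8}{n}-\frac{4}{n^2}<9
\]
for every $n$. Complementation does not remove the bad case $\delta=1$; it only moves it to $\delta=n-3$. A connected graph with $\Delta=2$, $\delta=1$ is a path, so these endpoints are exactly $G=P_n$ and $G=P_n^c$. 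This is an infinite family on which neither Lemma \ref{lem:degree} nor Lemma \ref{lem:spread_bound} gives spread above $3$, and no finite computation can cover it. The missing step is the one the paper uses: identify this family and compute directly,
\[
\mu_n(P_n)-\mu_2(P_n)=\bigl(2+2\cos\tfrac{\pi}{n}\bigr)-\bigl(2-2\cos\tfrac{\pi}{n}\bigr)=4\cos\tfrac{\pi}{n}>3 \quad (n\ge 5).
\]
Since the spread is complement-invariant, this also settles $P_n^c$.

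\textbf{Small $n$ also fails.} Even on $2\le\delta\le n-4$, the inequality does not hold for all $n\ge 8$. Writing $\Delta=\delta+1$ and clearing denominators gives the concave quadratic $-(4n+4)\Delta^2+(4n^2+4n-8)\Delta-(10n^2-8n+4)$, whose discriminant is $16n^2(n^2-8n-5)$.
\begin{itemize}
\item At $n=8$ the discriminant is negative, so Lemma \ref{lem:spread_bound} yields spread $>3$ for no $\delta$ at all.
\item At $n=9,10$ the complementary pair $\{\delta,\,n-2-\delta\}=\{2,\,n-4\}$ fails on both sides.
\item The bound is not complement-symmetric and is slightly stronger on the small-degree side. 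For $n=11,12$, your choice $\delta\ge (n-2)/2$ fails at $\delta=n-4$, while the complementary $\delta=2$ succeeds.
\end{itemize}
The paper's root analysis proves the inequality only for $n\ge 13$ and $3\le\Delta\le n-3$, and settles $8\le n\le 12$ by exhaustive search. Your last paragraph gestures at such a check, but your case analysis asserts the bound holds there. You need to state explicitly that the near-regular case for $8\le n\le 12$ is settled by computation, or give another argument.
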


\begin{proof}

Case 1: $\Delta = \delta =k$ (that is, $G$ is $k$-regular).

We apply Lemma \ref{thr:reg_laplacian}. Given the Nordhaus-Gaddum upper bounds, we must show under what conditions $2\sqrt{\frac{k(n-k-1)}{n}} \geq 3$. For $G$ and $G^c$ to be connected, $2\leq k\leq n-3$. We first consider $k=2$ and $k=n-3$. The only 2-regular graph where $G$ and $G^c$ are connected is $C_n$ and $G^c$ is the only connected $n-3$-regular graph.  Their spectra is well known and $\mu_j(C_n) = 2(1 - \cos\frac{2\pi j}{n})$ for $j = 0,1,2,\dots,n-1$. So $\mu_2(C_n) =2-2\cos(\frac{2\pi}{n})$ and $\mu_2(C_n^c) = n -2+2\cos(\frac{2\pi\lfloor n/2\rfloor}{n})$. Then $\mu_2(C_n) + \mu(C_n^c) = n-2(\cos(\frac{2\pi}{n})-\cos(\frac{2\pi \lfloor n/2\rfloor}{n})) \leq n-3$ when $n\geq 6$.

We now look at $3\leq k \leq n-4$.
The bound $2\sqrt{\frac{k(n-k-1)}{n}}$ from Lemma \ref{thr:reg_laplacian}  its minimums (with respect to $k$) at the endpoints. Then evaluating the endpoints, $2\sqrt{\frac{k(n-k-1)}{n}} = 2\sqrt{\frac{3(n-4)}{n}} \geq 3$ when $n \geq 16$. For $n\leq 15$, we exhaustively checked all $k$-regular graphs where the values of $k$ did not meet this criteria. The only exceptional regular graph is $C_5$.

Case 2: $\Delta -\delta \geq 2$.

By Lemma \ref{lem:degree}$, \Delta+1 \leq \mu_n$ and since $G$ and $G^c$ are connected, then in fact $\Delta+1>\mu_n$. Also $\mu_2\leq\delta$ by the same lemma, so $(\mu_n-\mu_2) > \Delta+1 - \delta \geq 3$. Thus $\mu_2(G)+\mu_2(G^c)=\mu_2(G)+n-\mu_n(G) 
< n-3$.
Thus if $\Delta$ and $\delta$ differ by 2 or more then the result holds.

Case 3: $\Delta-\delta =1$.

We use the same formulation as the previous case. By Eq. \ref{eq:5},
\[
\mu_n-\mu_2 \geq \sqrt{(\Delta - \delta)^2 + 2(\Delta+\delta) - \left(2\frac{\Delta+1}{n}\right)^2\left(\frac{\delta-1}{\Delta+1}n + 1\right)}
\]
Substituting $\delta=\Delta-1$ yields
\[
\sqrt{(\Delta - \delta)^2 + 2(\Delta+\delta) - \left(2\frac{\Delta+1}{n}\right)^2\left(\frac{\delta-1}{\Delta+1}n + 1\right)} = \sqrt{4\Delta-1 - \left(2\frac{\Delta+1}{n}\right)^2\left(\frac{\Delta-2}{\Delta+1}n + 1\right)}.
\]

By direct computation, it can be shown if $\Delta = 2$ as $n\to\infty$, 
\[
\sqrt{4\Delta-1 - \left(2\frac{\Delta+1}{n}\right)^2\left(\frac{\Delta-2}{\Delta+1}n + 1\right)} = \sqrt{7-\frac{36}{n}} \to \sqrt{7} < 3
\]
However the only graphs with $\Delta=2$ and meet the connectedness condition are paths $P_n$. Their spectra is well known: we have 
$\mu_i(P_n) = 2(1-\cos(\frac{i\pi}{n}))$ for $i = \{0, 1,\dots, n-1\}$.
Thus

\[
\mu_2(P_n)+\mu_2(P_n^c) = \mu_2(P_n) + n - \mu_n(P_n)=n-2\cos\left(\frac{\pi}{n}\right)+2\cos\left(\frac{(n-1)\pi}{n}\right) = n-4\cos\left(\frac{\pi}{n}\right)
\]

For $n \geq 5$, $n-4\cos(\frac{\pi}{n}) \leq n-3$, so $P_4$ is the only path with connected complement that exceeds the $n-3$ bound.

For $\Delta=n-1$ the complements will be disconnected so we do not consider them here. If $\Delta=n-2$, then $\delta=n-3$. This means for all vertices with degree $\Delta$ will be connected to one vertex in $G^c$ and vertices with degree $\delta$ will be connected to two vertices. While maintaining our connectedness premise, the only family of graphs that meet this condition is $P_n$ which we have already considered. 

Now, in the remaining cases, namely $3\leq\Delta\leq n-3$, we need to show
\[
\sqrt{4\Delta - 1 - 4\left(\frac{\Delta+1}{n}\right)^2\left(\frac{\Delta-2}{\Delta+1}n + 1\right)} > 3
\] which, after some algebra, is equivalent to
\[
\Delta^{2}\left(-4n-4\right)+\Delta\left(4n^2+4n-8\right)+\left(-10n^2+8n-4\right) > 0.
\]
Since the left-hand side is concave as a function of $\Delta$, we need only show that the interval $[3,n-3]$ is contained in the part of the real line where this function is positive.  Using the quadratic formula, the function is 0 for
\[
\Delta =\frac{n^2+n-2 \mp n\sqrt{n^2-8n-5}}{2n+2}.
\]
Thus, we need to show
\[
\frac{n^2+n-2 + n\sqrt{n^2-8n-5}}{2n+2} > n-3
\]
and
\[
\frac{n^2+n-2 - n\sqrt{n^2-8n-5}}{2n+2} < 3.
\]

The first (after some algebra) is equivalent to
\[
n^3-11n^2-20n-8 > 0
\]
which can be seen to be true for $n\geq13$.
Thus the root will always be greater than $n-3$ for $n\geq13$.

Similarly, the second equation is equivalent to
\[
2n^3 - 14n^2-80n-64 > 0
\]
which can again be seen to be true for $n\geq13.$
Thus the root will always be less than $3$ for $n\geq 13$. Doing an exhaustive search over all graphs with $\Delta-\delta=1$ for $n=8, 9, 10,11,12$ yielded no graphs whose Nordhaus-Gaddum sum is greater than $n-3$.
\end{proof}
 
\begin{figure}
    \centering

\begin{tikzpicture}[
every node/.style={circle, fill=pink!40, draw=pink!70!black,
inner sep=0pt, minimum size=6pt},
every edge/.style={gray!70, thick}
]

\foreach \i in {1,...,5} {
\node (v\i) at ({90 + (\i-1)*72}:1.5cm) {};
}
\foreach \i/\j in {1/2, 2/3, 3/4, 4/5, 5/1} {
\draw (v\i) -- (v\j);
}

\node (u) at (-.2, .5) {};
\draw (u) -- (v1);
\draw (u) -- (v2);

\node (w) at (.35, -.35) {};
\draw (w) -- (v4);
\draw (w) -- (u);

\begin{scope}[xshift=4cm]

\node (v3) at (-1.5, 1.5) {};
\node (v4) at (1.5, 1.5) {};
\node (v5) at (1.5, -1.2) {};
\node (v6) at (-1.5, -1.2) {};

\node (v2) at (.2, .5) {};
\node (v1) at (-0.3, 0) {};
\node (v0) at (0.5, -0.2) {};

\draw (v3) -- (v4);
\draw (v4) -- (v5);
\draw (v5) -- (v6);
\draw (v6) -- (v3);

\draw (v3) -- (v2);
\draw (v4) -- (v2);
\draw (v6) -- (v1);
\draw (v5) -- (v0);
\draw (v1) -- (v3);

\draw (v2) -- (v1);
\draw (v2) -- (v0);

\draw (v1) -- (v0);

\end{scope}

\begin{scope}[xshift=8cm]

\foreach \i in {1,...,5} {
\node (v\i) at ({90 + (\i-1)*72}:1.5cm) {};
}
\foreach \i/\j in {1/2, 2/3, 3/4, 4/5, 5/1} {
\draw (v\i) -- (v\j);
}

\end{scope}

\begin{scope}[xshift=10.5cm]
\node (v1) at (0,0) {};
\node (v2) at (1,0) {};
\node (v3) at (2,0) {};
\node (v4) at (3,0) {};

\draw (v1) -- (v2);
\draw (v2) -- (v3);
\draw (v3) -- (v4);
\end{scope}

\end{tikzpicture}
    \caption{All graphs with eigenvalue sums greater than $n-3$. The first two graphs are complements of each other, and $C_5$ and $P_4$ are self-complementary.}
    \label{fig:placeholder}
\end{figure}



The graphs under 8 vertices whose Nordhaus-Gaddum eigenvalue sum is greater than $n-3$ are shown in Figure 3. These are the only exceptions where both $G$ and $G^c$ are connected where $\mu_2(G)+\mu_2(G^c)>n-3$.  We remark also that for the well-known Petersen graph $P$, which has 10 vertices, we have $\mu_2(P)=2$ and $\mu_2(P^c)=5$, so $\mu_2(P)+\mu_2(P^c)=7=10-3$.  Likewise, for $C_6$, the cycle on $6$ vertices, $\mu_2(C_6)=1$ and $\mu_2(C_6^c)=2$, so $\mu_2(C_6)+\mu_2(C_6^c)=3=6-3$.  So equality in Theorem \ref{thm:alg_conn} is achieved for these examples (see Figure \ref{fig:petersen}).   As we saw in the proof of Theorem \ref{thm:alg_conn}, for cycles $C_n$ we have  $\mu_2(C_n) + \mu(C_n^c) = n-2(\cos(\frac{2\pi}{n})-\cos(\frac{2\pi \lfloor n/2\rfloor}{n}))$.  For $n>6$, this quantity lies strictly between $n-3$ and $n-4$, so cycles are an infinite family of graphs whose Nordhaus-Gaddum sum is close to the bound we have proven.

\begin{figure}
    \centering

\begin{tikzpicture}[
every node/.style={circle, fill=pink!40, draw=pink!70!black,
inner sep=0pt, minimum size=6pt},
every edge/.style={gray!70, thick}
]

  \begin{scope}
    \PetersenVertices
    \foreach \a/\b in {
      1/2,2/3,3/4,4/5,5/1,
      1/6,2/7,3/8,4/9,5/10,
      6/8,8/10,10/7,7/9,9/6}
    {
      \draw (v\a)--(v\b);
    }
  \end{scope}

  \begin{scope}[xshift=7cm]
    \PetersenVertices
    \foreach \a/\b in {
      1/3,1/4,1/7,1/8,1/9,1/10,
      2/4,2/5,2/6,2/8,2/9,2/10,
      3/5,3/6,3/7,3/9,3/10,
      4/6,4/7,4/8,4/10,
      5/6,5/7,5/8,5/9,
      6/7,6/10,
      7/8,
      8/9,
      9/10}
    {
      \draw (v\a)--(v\b);
    }
  \end{scope}

\end{tikzpicture}

\vspace{1cm}

\begin{tikzpicture}[
every node/.style={circle, fill=pink!40, draw=pink!70!black,
inner sep=0pt, minimum size=6pt},
every edge/.style={gray!70, thick}
]

  \begin{scope}
    \foreach \i/\ang in
    {1/30,2/90,3/150,4/210,5/270,6/330}{\node (v\i) at (\ang:2) {};}
    \foreach \a/\b in {
      1/2,2/3,3/4,4/5,5/6,6/1}
    {
      \draw (v\a)--(v\b);
    }
    
  \end{scope}

  \begin{scope}[xshift=7cm]
   \foreach \i/\ang in
    {1/30,2/90,3/150,4/210,5/270,6/330}{\node (v\i) at (\ang:2) {};}
    \foreach \a/\b in {
      1/3,1/4,1/5,2/4,2/5,2/6,
      3/5,3/6,3/1,
      4/6,4/1,4/1,
      5/1,5/2,5/3}
    {
      \draw (v\a)--(v\b);
    }
   
  \end{scope}

\end{tikzpicture}
 \caption{The Petersen graph $P$ with its complement, and the 6-cycle $C_6$ with its complement.  These examples achieve $\mu_2(G)+\mu_2(G^c)=n-3$.}
    \label{fig:petersen}
\end{figure}

  The remainder of this section is devoted to understanding families of graphs whose Nordhaus-Gaddum sum is close to the bounds in Lemmas \ref{lem:spread_bound} and \ref{lem:spred_bound_regular}. The next proposition and its corollary indicate that we should focus on regular graphs.

\begin{proposition}
  For a fixed $\delta$ and $n$, the bound 
\[
\sqrt{(\Delta-\delta)^2+2(\Delta+\delta)-\left(2\frac{\Delta+1}{n}\right)^2\left(\frac{\delta-1}{\Delta+1}+1\right)}
\]
is minimized when $\Delta=\delta$.  
\end{proposition}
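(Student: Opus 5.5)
We read the bound exactly as it appears in the statement, with the factor $\frac{\delta-1}{\Delta+1}+1$ and no factor of $n$ multiplying $\frac{\delta-1}{\Delta+1}$. Since the square root is increasing, it suffices to minimize the radicand
\[
f(\Delta)=(\Delta-\delta)^2+2(\Delta+\delta)-\frac{4(\Delta+1)^2}{n^2}\left(\frac{\delta-1}{\Delta+1}+1\right)
\]
over the admissible range $\delta\le\Delta\le n-1$. The first step is to simplify the last term:
\[
\frac{4(\Delta+1)^2}{n^2}\cdot\frac{\Delta+\delta}{\Delta+1}=\frac{4(\Delta+1)(\Delta+\delta)}{n^2},
\]
so that $f$ becomes a polynomial in $\Delta$.

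Next I will write $\Delta=\delta+t$ with $t\ge 0$ an integer and compute $f(\delta+t)-f(\delta)$ directly. The quadratic and linear terms contribute $t^2+2t$. For the correction term, a short expansion gives
\[
(\delta+t+1)(2\delta+t)-2\delta(\delta+1)=t(3\delta+1+t).
\]
Hence
\[
f(\delta+t)-f(\delta)=t\left(t+2-\frac{4(3\delta+1+t)}{n^2}\right).
\]

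The remaining step, and the only place where a real estimate is needed, is to show that the bracket is positive for $t\ge1$. I will use the constraint $\Delta=\delta+t\le n-1$, which gives $3\delta+1+t\le 3(n-1-t)+1+t=3n-2-2t$. The bracket is then at least
\[
t+2-\frac{4(3n-2-2t)}{n^2}\;\ge\; 3-\frac{4(3n-4)}{n^2},
\]
where the second inequality uses $t\ge 1$ in both places $t$ appears. The right-hand side is positive because $3n^2-12n+16>0$ for every $n$; its discriminant is negative. Therefore $f(\delta+t)>f(\delta)$ for every $t\ge1$, and the minimum over $\Delta\ge\delta$ is attained exactly at $\Delta=\delta$, which is the claim.

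I expect the main obstacle to be precisely this positivity check, and it rests on reading the statement literally. If one instead uses the version in Lemma~\ref{lem:spread_bound}, with the extra factor $n$ on $\frac{\delta-1}{\Delta+1}$, the same computation gives a bracket of the form $t+2-\frac{4(\delta-1)}{n}-\frac{4(2\delta+2+t)}{n^2}$. This can be negative when $t=1$ and $\delta$ is close to $n$, for example $\delta=n-2$ and $\Delta=n-1$. In that reading, monotonicity holds only under an extra hypothesis, roughly $n^2\ge 2n(\delta-1)+4(\delta+2)$, so the argument would require restricting $\delta$. I would therefore prove the statement as literally written above, and add a remark that for the Lemma~\ref{lem:spread_bound} version the same difference computation works once $\delta$ is at most about $n/2$.
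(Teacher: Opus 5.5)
Your proof is correct. The simplification $\frac{\delta-1}{\Delta+1}+1=\frac{\Delta+\delta}{\Delta+1}$ is right, as is the identity $f(\delta+t)-f(\delta)=t\bigl(t+2-\frac{4(3\delta+1+t)}{n^2}\bigr)$, and so is the estimate via $\delta\le n-1-t$.

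The paper takes a slightly different route: it differentiates in $\Delta$ as a real variable. The radicand is a convex quadratic with leading coefficient $1-\frac{4}{n^2}$. Its only critical point, $\frac{n^2(\delta-1)+2(\delta+1)}{n^2-4}$, lies below $\delta$ exactly when $n^2>6\delta+2$. This holds for all $\delta\le n-2$ because $n^2-6n+10>0$. The paper only asserts where the critical point lies, so your discrete comparison with $\Delta=\delta$ is the more explicit version. It needs only that $\Delta$ is an integer, while the calculus route gives monotonicity on the whole real interval, which is more than the statement requires.

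Your closing remark is also correct, and it applies to the paper's sketch as well. Restore the factor $n$ from Lemma~\ref{lem:spread_bound} and take $n=10$, $\delta=7$. Then the radicand is $6.24$ at $\Delta=7$ but $6.16$ at $\Delta=8$. So the critical point falls inside $[\delta,n-1]$, and the claim holds only for the expression exactly as printed.

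For that version, your own difference formula at $t=1$ gives the exact integer condition $3n^2>4n(\delta-1)+4(2\delta+3)$, which is roughly $\delta<3n/4$. Your threshold near $n/2$ is essentially the more conservative condition that the derivative at $\Delta=\delta$ be nonnegative.
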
 

\begin{proof}
    For fixed $\delta$ and $n$, taking the derivative with respect to $\Delta$, 
one can see that the only critical point is outside the interval $[\delta,n-1]$, the the minimum occures when $\Delta=\delta$.
\end{proof} 

\begin{corollary}
    For graphs with minimum degree $\delta$, the Nordhaus-Gaddum upper bound is maximized 
    \[\mu_2(G) + \mu_2(G^c) \leq n - \sqrt{(\Delta-\delta)^2 + 2(\Delta+\delta) - \left(2\frac{\Delta+1}{n}\right)^2\left(\frac{\delta-1}{\Delta+1} +1\right)}\]
    when $\Delta=\delta$.
    \label{cor:1}
\end{corollary}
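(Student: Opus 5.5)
The Corollary follows from the Proposition just proved. I will not redo the analysis in $\Delta$; I will only translate it into the Nordhaus-Gaddum form. Fix $n$ and $\delta$ and set
\[
f(\Delta)=(\Delta-\delta)^2+2(\Delta+\delta)-\left(2\frac{\Delta+1}{n}\right)^2\left(\frac{\delta-1}{\Delta+1}+1\right),
\]
regarded as a function of $\Delta\in[\delta,n-1]$. By Lemma \ref{lem:spread_bound} and the identity (\ref{eq:2}),
\[
\mu_2(G)+\mu_2(G^c)=n-(\mu_n(G)-\mu_2(G))\le n-\sqrt{f(\Delta)}
\]
for every graph with minimum degree $\delta$ and maximum degree $\Delta$. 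The quantity $n-\sqrt{f(\Delta)}$ is a decreasing function of $\sqrt{f(\Delta)}$. So maximizing this upper bound over the admissible $\Delta$ is the same as minimizing $\sqrt{f(\Delta)}$, or equivalently minimizing $f(\Delta)$, since the square root is monotone on the region where $f\ge 0$.

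The Proposition states that this minimum over $[\delta,n-1]$ is attained at $\Delta=\delta$. To make that step explicit, I will expand $f$. The last term equals
\[
\frac{4(\Delta+1)(\delta+\Delta)}{n^2}.
\]
Hence $f$ is a quadratic in $\Delta$ with leading coefficient $1-4/n^2>0$, so it is convex. Its derivative is
\[
f'(\Delta)=2(\Delta-\delta)+2-\frac{4(2\Delta+1+\delta)}{n^2}.
\]
At $\Delta=\delta$ this gives
\[
f'(\delta)=2-\frac{4(3\delta+1)}{n^2}.
\]
Because $\delta\le n-1$, we have $4(3\delta+1)\le 12n-8$. For $n\ge 6$ this is less than $2n^2$, so $f'(\delta)>0$. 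A convex function with positive derivative at the left endpoint is increasing on the whole interval, so $f$ is minimized at $\Delta=\delta$ and $n-\sqrt{f}$ is maximized there. For $3\le n\le 5$ I will check the finitely many pairs $(\delta,\Delta)$ by direct computation.

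The main obstacle is not the monotonicity argument but the validity of the square root. I need $f(\Delta)\ge 0$ on the range where the Corollary is applied. This holds automatically wherever Lemma \ref{lem:spread_bound} applies, because its right-hand side is a real lower bound there. I will state the Corollary for exactly the parameter range of that lemma: $n\ge 3$ and $G$ having at least one edge.

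Finally, I will note a discrepancy in notation. The displayed expression in the Proposition and Corollary omits the factor $n$ that appears in (\ref{eq:5}). I will write the argument for the expression exactly as displayed. The same derivative argument applies to the version with the factor $n$, whose quadratic coefficient is likewise positive.
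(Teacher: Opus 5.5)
Your calculus for the expression as displayed is correct, and it is the paper's argument made explicit: the paper's one-line claim that the critical point lies outside $[\delta,n-1]$ is what your convexity-plus-$f'(\delta)>0$ computation spells out. The small-$n$ check is also unnecessary: if $\delta\le n-2$ then $4(3\delta+1)\le 12n-20<2n^2$ for every $n$, and $\delta=n-1$ leaves nothing to prove.

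The gap is in the two claims that connect $f$ to Lemma~\ref{lem:spread_bound}, and both are false. First, the lemma does not give $\mu_2(G)+\mu_2(G^c)\le n-\sqrt{f(\Delta)}$. Let $g$ denote the lemma's expression, which contains the factor $n$. Then
\[
f(\Delta)-g(\Delta)=\frac{4(\Delta+1)(\delta-1)(n-1)}{n^2}\ge 0 \qquad (\delta\ge 1),
\]
so dropping the factor $n$ \emph{strengthens} the bound, and the strengthened inequality fails. For the Petersen graph ($n=10$, $\Delta=\delta=3$) one has $f=12-0.96=11.04>9=(\mu_{10}-\mu_2)^2$, that is, $\mu_2(P)+\mu_2(P^c)=7>10-\sqrt{11.04}$.

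Second, the derivative argument does not transfer to $g$. Convexity does transfer, since the leading coefficient is still $1-4/n^2$. But
\[
g'(\delta)=2-\frac{4(\delta-1)}{n}-\frac{8(\delta+1)}{n^2},
\]
which is negative as soon as $\delta>\frac n2-\frac{2}{n+2}$, i.e.\ for every integer $\delta\ge n/2$. So $g$ initially decreases in $\Delta$. Concretely, for $n=6$, $\delta=4$ one gets $g(4)=29/9>3=g(5)$: the lemma's Nordhaus--Gaddum bound is larger for $K_6$ minus an edge than for the octahedron. This is not an artifact of $\Delta=n-1$. For $n=12$, $\delta=9$ one gets $g(10)=227/36<59/9=g(9)$, comparing $P_{12}^c$ with $C_{12}^c$; here both graphs and both complements are connected.

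Hence your argument establishes monotonicity only for a function that is not a valid upper bound. For the function that is a valid bound, the claimed monotonicity is false. The paper's Proposition and Corollary carry the same missing factor, so following the paper would not have avoided this, but your proposal explicitly asserts both false claims. What can actually be proved is one of two things:
\begin{itemize}
\item the statement about the displayed function alone, with no claim that it bounds $\mu_2(G)+\mu_2(G^c)$; or
\item the statement for $g$ restricted to $\delta<n/2$, where $g'(\delta)>0$ and your convexity argument goes through unchanged.
\end{itemize}
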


Corollary \ref{cor:1} says $k$-regular graphs have the highest possible Nordhaus-Gaddum upper bound. This leads us to further investigate graphs with very regular structures. Strongly regular graphs are parameterized by $(n, k, \lambda, \mu)$ where $\lambda$ is the number of common neighbors between adjacent nodes and $\mu$ is the number of common neighbors between non-adjacent nodes. It is known that there are only two non-zero Laplacian eigenvalues for a strongly regular graph and there is a formula to compute each of them.

\begin{lemma}
    The non-zero eigenvalues of a strongly regular graphs are 
    \[\mu = k - \frac{1}{2}\left[(\lambda-\mu) \pm\sqrt{(\lambda - \mu)^2 + 4(k-\mu)}\right].
    \]
    \label{lem:srg_eig}
\end{lemma}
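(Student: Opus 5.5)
The plan is to pass from adjacency eigenvalues to Laplacian eigenvalues. A strongly regular graph with parameters $(n,k,\lambda,\mu)$ is $k$-regular, so $L = kI - A$ and the Laplacian eigenvalues are exactly $k-\theta$ as $\theta$ ranges over the adjacency eigenvalues. It therefore suffices to show two things: the adjacency matrix has eigenvalue $k$ on the all-ones vector, and its only other eigenvalues are the roots $\theta$ of
\[
\theta^2-(\lambda-\mu)\theta-(k-\mu)=0,
\]
namely $\theta = \frac{1}{2}\left[(\lambda-\mu)\pm\sqrt{(\lambda-\mu)^2+4(k-\mu)}\right]$. Substituting these into $k-\theta$ gives the displayed formula. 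The eigenvalue $k$ of $A$ corresponds to the Laplacian eigenvalue $0$, which is excluded from the statement.

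The key step is the standard matrix identity
\[
A^2 = kI + \lambda A + \mu(J-I-A).
\]
I would verify it entrywise. The $(u,v)$ entry of $A^2$ counts common neighbours of $u$ and $v$. This count is $k$ when $u=v$, $\lambda$ when $u\sim v$, and $\mu$ when $u\ne v$ and $u\not\sim v$, which is exactly what the right-hand side records. Rearranging gives
\[
A^2-(\lambda-\mu)A-(k-\mu)I=\mu J.
\]

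Next I would use orthogonality to $\mathbf 1$. Since $A$ is symmetric and $A\mathbf{1}=k\mathbf 1$, the space $\mathbf 1^\perp$ is $A$-invariant and has an orthonormal eigenbasis of $A$. Take an eigenvector $x\perp\mathbf 1$ with $Ax=\theta x$. Then $Jx=0$, so applying the identity to $x$ gives $\theta^2-(\lambda-\mu)\theta-(k-\mu)=0$, and $\theta$ is one of the two roots above. Translating back via $L=kI-A$, every Laplacian eigenvector orthogonal to $\mathbf 1$ has eigenvalue $k-\theta$ with $\theta$ one of these two roots.

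I expect the main obstacle to be a caveat rather than a computation: not both roots need actually occur. When the graph is disconnected or complete, only one of the two values may appear as an eigenvalue. The statement should therefore be read as saying that every nonzero Laplacian eigenvalue lies in this two-element set. I would also remark that for a connected graph $\mu>0$ gives $k-\mu\geq 0$, so the discriminant is nonnegative and both values are real.
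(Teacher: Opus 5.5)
The paper does not prove this lemma; it quotes it as a known fact. Your derivation is the standard one and it is correct: the identity $A^2=kI+\lambda A+\mu(J-I-A)$, restriction to the $A$-invariant subspace $\mathbf 1^\perp$ where $J$ vanishes, and the shift $L=kI-A$.

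Two small remarks.

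First, your argument shows that every nonzero Laplacian eigenvalue lies in the two-element set. The paper's next proposition, however, sets $\mu_2(G)$ equal to the value with the $+$ sign, so it needs both roots to actually occur. For a connected, non-complete strongly regular graph you can close this in one line. Suppose $A$ acted on $\mathbf 1^\perp$ as a single scalar $\theta$. Then $A=\theta\left(I-\frac1nJ\right)+\frac knJ$ would have constant off-diagonal entries, forcing $G$ to be complete or empty. Hence both roots of the quadratic occur on $\mathbf 1^\perp$. Since $k$ is a simple eigenvalue of a connected graph, neither root equals $k$, so both Laplacian values $k-\theta_\pm$ are genuinely nonzero eigenvalues.

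Second, in your closing sentence, $\mu>0$ is not what gives $k-\mu\ge 0$. The inequality $k\ge\mu$ always holds, since two nonadjacent vertices share at most $k$ neighbours. In any case, the reality of the roots is automatic from the symmetry of $A$.
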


\begin{proposition}
    Strongly regular graphs parameterized by $(n, k, \lambda, \mu)$ have an eigenvalue sum of
    \[
    \mu_2(G) + \mu_2(G^c) = n - \sqrt{(\lambda-\mu)^2 +4(k-\mu)}
    \]
\end{proposition}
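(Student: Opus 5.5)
The plan is to use the complement identity \eqref{eq:2}, which turns the Nordhaus-Gaddum sum into a statement about the Laplacian spread:
\[
\mu_2(G)+\mu_2(G^c)=\mu_2(G)+n-\mu_n(G)=n-(\mu_n(G)-\mu_2(G)).
\]
So it suffices to show that $\mu_n(G)-\mu_2(G)=\sqrt{(\lambda-\mu)^2+4(k-\mu)}$. Here the parameter $\mu$ must be kept distinct from the eigenvalues $\mu_i$. We will assume, as is implicit, that $G$ is a connected, non-complete strongly regular graph. Then $G$ has exactly two distinct nonzero Laplacian eigenvalues, as recalled before Lemma \ref{lem:srg_eig}, and these are $\mu_2(G)$ and $\mu_n(G)$.

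Next we identify these two values using Lemma \ref{lem:srg_eig}. We write
\[
\theta_\pm = k - \tfrac{1}{2}\left[(\lambda-\mu) \pm \sqrt{(\lambda-\mu)^2+4(k-\mu)}\right].
\]
If Lemma \ref{lem:srg_eig} is not taken for granted, it follows in the standard way. A strongly regular graph satisfies $A^2=kI+\lambda A+\mu(J-I-A)$. On the orthogonal complement of the all-ones vector this reduces to $A^2-(\lambda-\mu)A-(k-\mu)I=0$. The restricted adjacency eigenvalues are therefore the roots $r,s=\tfrac12[(\lambda-\mu)\pm\sqrt{(\lambda-\mu)^2+4(k-\mu)}]$. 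Since $L=kI-A$ for a $k$-regular graph, the nonzero Laplacian eigenvalues are $k-r$ and $k-s$.

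Connectivity and non-completeness of $G$ give $k>\mu$, so the discriminant is positive. Hence $\theta_+<\theta_-$ and $\mu_2(G)=\theta_+$, $\mu_n(G)=\theta_-$. Subtracting, the $k$ and $(\lambda-\mu)$ terms cancel:
\[
\mu_n(G)-\mu_2(G)=\theta_--\theta_+=\sqrt{(\lambda-\mu)^2+4(k-\mu)}.
\]
Substituting this into the spread identity gives the claimed formula.

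The main obstacle is degenerate cases, where the "two eigenvalues" picture fails. One such case is a disconnected strongly regular graph, which is a disjoint union of cliques, with $\mu=0$. Another is one whose complement is disconnected, a complete multipartite graph. In these cases one of the two values $\theta_\pm$ may have zero multiplicity, or $0$ itself may appear among $\mu_2,\dots,\mu_n$. We would handle them separately. For a union of $m\ge2$ copies of $K_{k+1}$, we have $\mu_2(G)=0$ and $\mu_n(G)=k+1$. The formula gives $\sqrt{(k-1)^2+4k}=k+1$, so it still holds. The complete multipartite case is symmetric under complementation. Beyond this, it is worth checking that the formula is invariant under passing to the complement, using the complement parameters $(n,n-k-1,n-2-2k+\mu,n-2k+\lambda)$. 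That is a useful sanity check rather than a needed step.
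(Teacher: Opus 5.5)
Your proof is correct, but it handles the complement differently from the paper. The paper applies Lemma~\ref{lem:srg_eig} twice. It uses it once for $G$ and once for $G^c$ with the complementary parameters $(n,n-1-k,n-2-2k+\mu,n-2k+\lambda)$, then adds the two smaller roots. The paper prints the complement's $\lambda$ with a sign slip as $n-2-2k-\mu$; your $n-2-2k+\mu$ is the correct value.

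You instead use \eqref{eq:2} to write $\mu_2(G^c)=n-\mu_n(G)$. Then only the spectrum of $G$ is needed, and the claim becomes an exact evaluation of the Laplacian spread, $\mu_n(G)-\mu_2(G)=\theta_--\theta_+=\sqrt{(\lambda-\mu)^2+4(k-\mu)}$.

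Your route is shorter and needs no complement parameters. It also shows directly why strongly regular graphs are the natural comparison for Lemma~\ref{lem:spred_bound_regular}. The paper's route, in turn, exhibits directly that the discriminant is invariant under complementation. You also treat the imprimitive cases, which the paper passes over.

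One justification should be corrected, though the conclusion survives. Connectivity and non-completeness do not imply $k>\mu$: complete multipartite graphs such as $C_4=K_{2,2}$ are connected and non-complete with $k=\mu$. You do not need $k>\mu$ at all. The inequality $\theta_+\le\theta_-$ is automatic because the square root is nonnegative. In fact the discriminant is always at least $1$, since $k\ge\mu$ always holds and $k=\mu$ forces $\lambda\le k-1<\mu$.

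With that fix, your main argument already covers complete multipartite graphs directly. For $m\ge 2$ parts of size $a\ge 2$, both $\theta_+=n-a$ and $\theta_-=n$ occur as eigenvalues. So the separate reduction by complementation is unnecessary, and that reduction would in fact need the complement-invariance check you called optional. Similarly, for a disjoint union $mK_{k+1}$ one has $\theta_+=0$. There $\mu_2=\theta_+$ and $\mu_n=\theta_-$ still hold, so the formula is valid uniformly.
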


\begin{proof}
    
    Using Lemma \ref{lem:srg_eig}, we have $\mu_2(G) = k - \frac{1}{2}\left[(\lambda-\mu) +\sqrt{(\lambda - \mu)^2 + 4(k-\mu)}\right]$. The complement of a strongly regular graph is also strongly regular parameterized by $(n, n-1-k, n-2-2k-\mu, n-2k+\lambda)$. Thus
    \[
    \mu_2(G^c) = n-k - \frac{1}{2}\left[(\mu-\lambda)) +\sqrt{(\lambda-\mu)^2 + 4(k-\mu)}\right]
    \]
    Then, 
    \[
    \mu_2(G)+\mu_2(G^c) = k - \frac{1}{2}\left[(\lambda-\mu) +\sqrt{(\lambda - \mu)^2 + 4(k-\mu)}\right] + n-k - \frac{1}{2}\left[(\mu-\lambda) +\sqrt{(\lambda-\mu)^2 + 4(k-\mu)}\right]
    \]
    and thus
    \[
    \mu_2(G)+\mu_2(G^c) = n-\sqrt{(\lambda-\mu)^2 + 4(k-\mu)}.
    \]
\end{proof}

Analyzing this sum, it is largest when $\lambda$ and $\mu$ are large and equal. Comparing it to the known bound above, when $\lambda$ and $\mu$ are equal, it is on the order of $n-\sqrt{4k}$, which is fairly close to our bound. It is known that if a strongly regular graphs exists then the equality $k(k-\lambda-1)=(n-k-1)\mu$ is satisfied. 

For example, conference graphs are strongly regular graphs parameterized only by n as $(n, \frac{n-1}{2}, \frac{n-5}{4}, \frac{n-1}{4})$. Using the formula for the upper bound, the eigenvalue sum of a conference graph is $n-\sqrt{n}$ which is quite close (and asymptotically equal) to the bound in Lemma \ref{lem:spred_bound_regular} for this regularity.  Thus, if it is desirable to have a graph and complement that are both extremely well connected, conference graphs will be near optimal. 


\section{Further Directions}

In this section we will investigate data for two more graph connectivity parameters and conjecture Nordhaus-Gaddum type upper bounds based on this data. We will examine the Cheeger constant and the normalized Laplacian. Much like how we did previously, we analyze the Cheeger constant to give us intuition of the normalized Laplacian matrix. As mentioned in the introduction, there is a famous result linking these two invariants called the Cheeger inequality.

\begin{lemma}[Lemma 2.1 and Theorem 2.2 of \cite{Fanchungbook}]
    For any graph $G$, \[\frac{h(G)^2}{2}\leq\lambda_2(G)\leq 2h(G).\]
\end{lemma}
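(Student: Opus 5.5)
The plan is to prove the two inequalities separately, working throughout with the Rayleigh quotient of the normalized Laplacian in its ``harmonic'' form. Writing $g = D^{-1/2}\phi$ for $\phi\perp D^{1/2}\mathbf{1}$, we have
\[
\lambda_2(G) = \min_{g\neq 0,\ \sum_v d_v g(v) = 0} \frac{\sum_{uv\in E}(g(u)-g(v))^2}{\sum_v d_v g(v)^2}.
\]
Here $h(G)=\min_S \frac{|\partial S|}{\min\{\mathrm{vol}(S),\mathrm{vol}(\bar S)\}}$ with $\mathrm{vol}(S)=\sum_{v\in S}d_v$. We may assume $G$ has no isolated vertices so that $D^{-1/2}$ makes sense, and that $G$ is connected; otherwise both sides are $0$, which is checked directly.

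\emph{Upper bound $\lambda_2\le 2h$.} Choose $S$ attaining $h(G)$, with $\mathrm{vol}(S)\le \mathrm{vol}(\bar S)$. Use the test function $g=\mathbf{1}_S/\mathrm{vol}(S)-\mathbf{1}_{\bar S}/\mathrm{vol}(\bar S)$, which satisfies $\sum d_v g(v)=0$. Only edges in $\partial S$ contribute to the numerator, giving $|\partial S|\left(\frac1{\mathrm{vol} S}+\frac1{\mathrm{vol}\bar S}\right)^2$. The denominator is $\frac1{\mathrm{vol} S}+\frac1{\mathrm{vol}\bar S}$. Hence the quotient equals $|\partial S|\left(\frac1{\mathrm{vol} S}+\frac1{\mathrm{vol}\bar S}\right)\le \frac{2|\partial S|}{\mathrm{vol} S}=2h(G)$.

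\emph{Lower bound $h^2/2\le\lambda_2$.} This is the main obstacle. I would follow the standard sweep (co-area) argument.
\begin{enumerate}
\item Take a harmonic eigenfunction $g$ for $\lambda_2$ and order the vertices so that $g(v_1)\ge\dots\ge g(v_n)$.
\item Shift $g$ by a constant $c$ chosen as a weighted median, so that $\{g>c\}$ and $\{g<c\}$ each have volume at most $\mathrm{vol}(G)/2$. Because $\sum d_v g(v)=0$, the shift only increases the denominator, so the Rayleigh quotient of $g-c$ is at most $\lambda_2$.
\item Split $g-c=g_+-g_-$. Using the eigen-equation $\sum_{u\sim v}(g(v)-g(u))=\lambda_2 d_v g(v)$ summed against $g_+$, show that one of the parts, say $f=g_+$, satisfies $\sum_{uv}(f(u)-f(v))^2\le \lambda_2\sum_v d_v f(v)^2$. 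Its support $\{f>0\}$ has volume at most $\mathrm{vol}(G)/2$.
\item Apply Cauchy--Schwarz:
\[
\sum_{uv}|f(u)^2-f(v)^2| \le \Big(\sum_{uv}(f(u)-f(v))^2\Big)^{1/2}\Big(\sum_{uv}(f(u)+f(v))^2\Big)^{1/2},
\]
and bound the last factor by $\big(2\sum_v d_vf(v)^2\big)^{1/2}$.
\item Evaluate the left side by the co-area formula. Writing it as a sum over the level sets $S_i=\{v_1,\dots,v_i\}$ of $|\partial S_i|\,(f(v_i)^2-f(v_{i+1})^2)$, each $S_i$ inside the support has $\mathrm{vol}(S_i)\le \mathrm{vol}(\bar S_i)$. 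So $|\partial S_i|\ge h\,\mathrm{vol}(S_i)$, and summing by parts gives at least $h\sum_v d_v f(v)^2$.
\item Combine the two estimates to get $h\sum d_vf^2\le \sqrt{\lambda_2}\,\sqrt2\,\sum d_v f^2$, that is, $\lambda_2\ge h^2/2$.
\end{enumerate}

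The delicate points are the median shift in step 2 and the positive-part inequality in step 3. For step 3 I would first verify that, for each $v$ with $f(v)>0$, $\sum_{u\sim v}(f(v)-f(u))\le \lambda_2 d_v f(v)$, since replacing $g(u)$ by $\max(g(u)-c,0)$ can only decrease the left side, and then sum over $v$ against $f(v)$.
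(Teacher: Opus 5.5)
Your proof is correct and is essentially the standard argument from Chung's book, which the paper cites without giving a proof of its own: the two-valued test function $\mathbf{1}_S/\mathrm{vol}(S)-\mathbf{1}_{\bar S}/\mathrm{vol}(\bar S)$ gives $\lambda_2\le 2h$, and the median shift, positive part, Cauchy--Schwarz and co-area sweep give $\lambda_2\ge h^2/2$. The one detail to tighten is in step 3: replacing $g(u)-c$ by $\max(g(u)-c,0)$ only yields $\sum_{u\sim v}(f(v)-f(u))\le\lambda_2 d_v g(v)$, and this is at most $\lambda_2 d_v f(v)=\lambda_2 d_v(g(v)-c)$ only when $c\le 0$, so you should take $f=g_+$ if $c\le 0$ and $f=g_-$ (equivalently, replace $g$ by $-g$) if $c\ge 0$; alternatively, as Chung does, drop the eigen-equation and combine your step 2 with $\frac{a+b}{p+q}\ge\min\{\frac{a}{p},\frac{b}{q}\}$ applied to the splitting $g-c=g_+-g_-$.
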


\subsection{Cheeger Constant}

The Cheeger constant can be thought of as a the normalized version of the isoperimetric number. It is defined as 
\[h(G) = \min_{X\subset V}\frac{|\partial(X, \bar{X})|}{\min\{vol(X), vol(\bar{X})\}}.\]

We will identify families of graphs that performed well experimentally and conjecture what a Nordhaus-Gaddum upper bound would be for the Cheeger constant. In Figure \ref{cheegerplots}, we plotted Cheeger constant sums for a graph and its complement up to 9 vertices as we have done before.

\begin{figure}
    \includegraphics[width=0.3\linewidth]{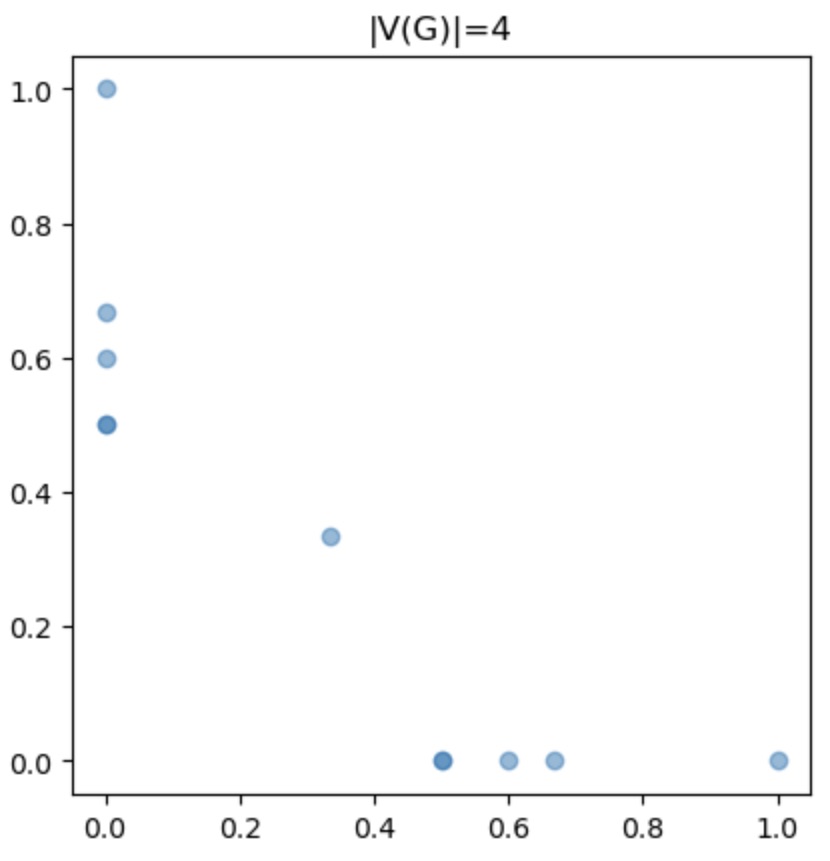}
    \includegraphics[width=0.3\linewidth]{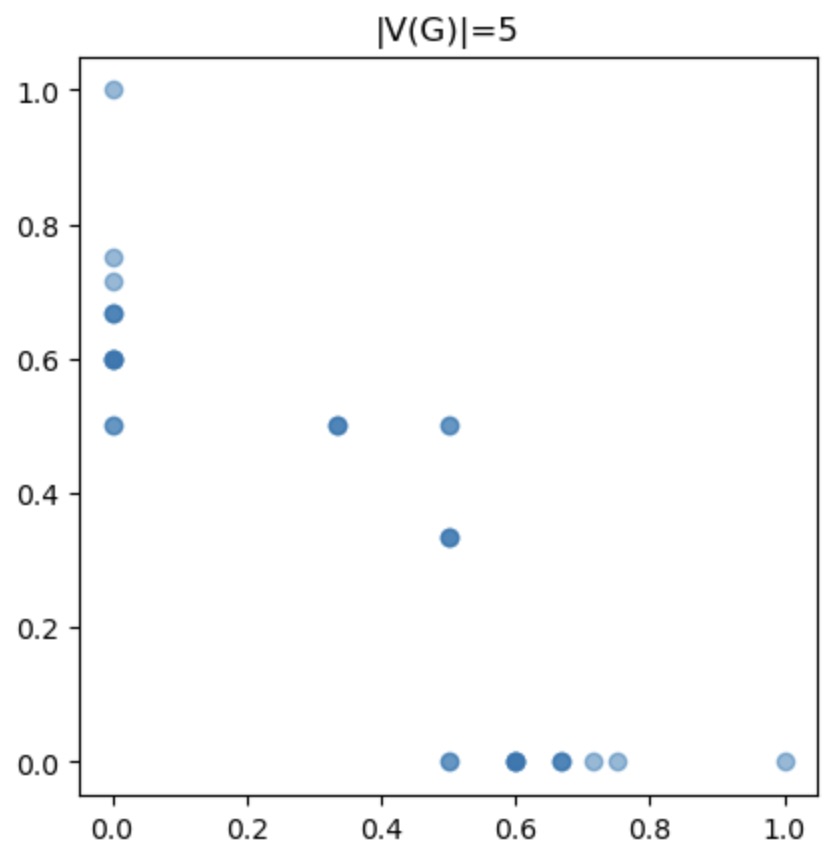}
    \includegraphics[width=0.3\linewidth]{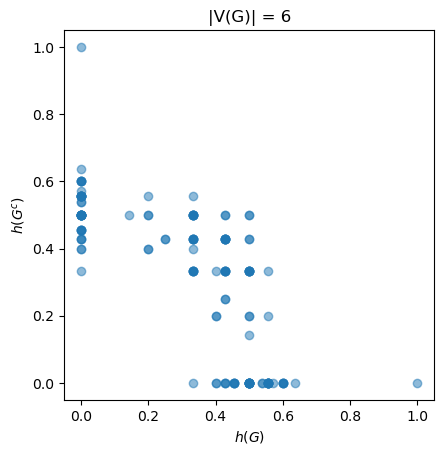}
        \centering

    \centering
    \includegraphics[width=0.3\linewidth]{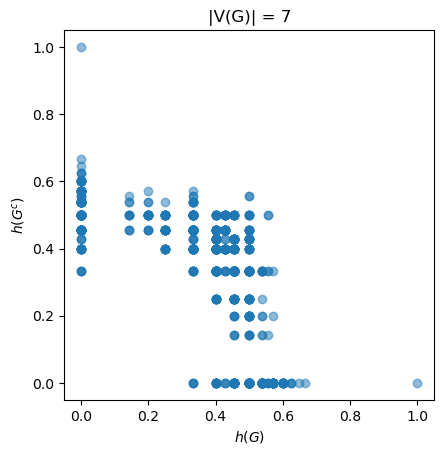}
    \includegraphics[width=0.3\linewidth]{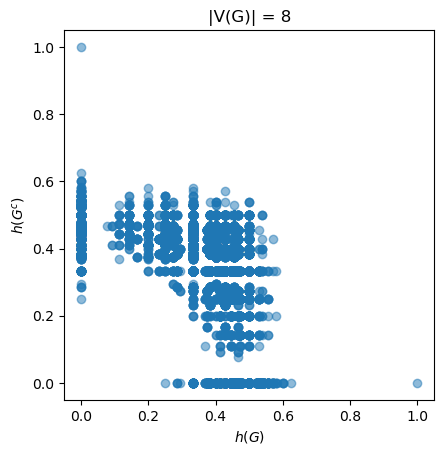}
    \includegraphics[width=0.3\linewidth]{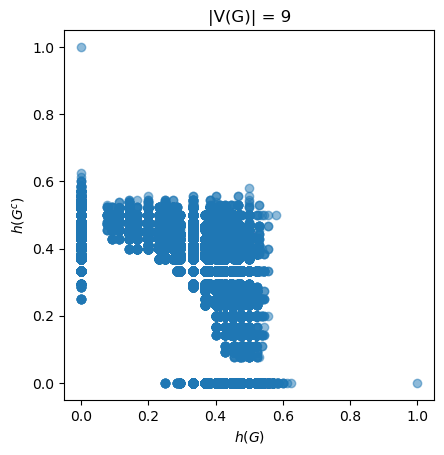}

    \caption{Plots for Cheeger Constant for $n=4, 5, 6, 7, 8, 9$}\label{fig:cheeger_plots}
    \label{cheegerplots}
\end{figure}

We make some useful observations to ease the computations regarding the Cheeger constant. First, note that $vol(X)=\partial(X,\bar X)$+2$\partial(X,X)$. Additionally, if a graph is disconnected, then the Cheeger constant is 0. 
For simplicity, we will use the notation $\partial X$ to refer to the edges between the vertices contained in $X$. That is, $\partial X=\partial(X,X)$.

\begin{lemma}
    If $G$ does not have a vertex with degree $n-1$ and $G$ is connected, then there exists a set $X$ such that $\partial X$ and $\partial \bar X$ are both non-zero.
    \label{cheeger_nonzero}
\end{lemma}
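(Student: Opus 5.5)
The plan is to reduce the statement to finding two vertex-disjoint edges in $G$, i.e.\ a matching of size $2$. Once we have disjoint edges $ab$ and $cd$, we set $X=\{a,b\}$ together with any subset of $V\setminus\{a,b,c,d\}$, with $c,d\in\bar X$. Then $ab\in\partial X$ and $cd\in\partial\bar X$, so both are non-zero. Moreover $X$ and $\bar X$ are both nonempty proper subsets, so $X$ is a valid set for the Cheeger constant.

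To produce the matching, I would fix any edge $uv$ of $G$. Such an edge exists because $G$ is connected and, having no vertex of degree $n-1$, is not $K_1$. There are two cases.

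\emph{Some edge avoids both $u$ and $v$.} That edge together with $uv$ is the desired pair, and we are done.

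\emph{Every edge of $G$ meets $\{u,v\}$.} Here we use the degree hypothesis. Since $\deg u\le n-2$, there is a vertex $w\neq u$ not adjacent to $u$, and $w\neq v$ because $uv$ is an edge. Since $G$ is connected, $w$ has some neighbour. Every edge meets $\{u,v\}$ and $w\not\sim u$, so that neighbour must be $v$, giving $w\sim v$. Symmetrically, since $\deg v\le n-2$, there is $w'\notin\{u,v\}$ with $w'\not\sim v$, and the same reasoning forces $w'\sim u$. Now $w\neq w'$, because $w\sim v$ while $w'\not\sim v$. Hence $uw'$ and $vw$ are vertex-disjoint edges.

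In both cases we obtain two disjoint edges, and the first paragraph finishes the proof. The only genuine step is the second case, where the absence of a dominating vertex is used twice: once at $u$ and once at $v$. Connectivity is what forces the non-neighbours $w$ and $w'$ to attach to the other endpoint of $uv$. This argument also shows implicitly that $n\ge 4$, consistent with the fact that every connected graph on at most $3$ vertices has a vertex of degree $n-1$.
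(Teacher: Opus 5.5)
Your proof is correct. It arrives at the same object as the paper, a two-vertex set spanning an edge whose complement also spans an edge, but by a different argument.

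The paper argues by contradiction. It takes a set $X$ with $\partial X=0$, picks any edge $yz$ with $y\in X$, $z\in\bar X$, and asserts that $\partial \bar Y\neq 0$ for $Y=\{y,z\}$ because every vertex of $X$ has a neighbor in $\bar X$. That route is shorter, but the step is not justified as written. The hypothesis that no vertex has degree $n-1$ is never invoked, and the same reasoning applies verbatim to the star $S_n$, where the conclusion is false. Even under the hypothesis, an arbitrary crossing edge can fail: in the path $P_4$ with vertices $a,b,c,d$ in order and $X=\{a,c\}$, the crossing edge $cb$ leaves $\{a,d\}$ with no edge.

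Your direct case split is exactly what handles this. When every edge meets $\{u,v\}$, you abandon $uv$ and use $\deg u,\deg v\le n-2$ together with connectivity to build the disjoint edges $uw'$ and $vw$. In effect you prove that a connected graph whose edges pairwise intersect has a vertex of degree $n-1$. That is the fact the paper's contradiction hypothesis reduces to, but the paper does not establish it. Your version costs a short case analysis and in return gives a complete proof that visibly uses both hypotheses.
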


\begin{proof}
    By way of contradiction, we assume that no such set $X$ exists. Then there is a set $X$ such that either $\partial X$ or $\partial \bar X$ will have to be zero. Without loss of generality, we assume $\partial X=0$. Then we see that since $G$ is connected, every vertex in $X$ must be connected with at least one edge in $\bar X$. Now, we choose a vertex $y \in X$ and $z \in \bar X$, such that $y$ and $z$ are connected. Then we see that for $G$ to be connected, either $y$ would have to be connected to another vertex in $\bar X$ or $z$ would have to be connected to a vertex in $X$ or $\bar X$. Now, we choose the set $Y$=\{$y,z$\}, Then since every vertex in $X$ is connected to another vertex in $\bar X$, we see that the set $\partial \bar Y$ is non-zero. But  $\partial Y$ is also non-zero, which is a contradiction to our initial assumption. 
\end{proof}

\begin{proposition}
    $h(G)\leq1$ with equality if and only if $G=S_n$.
\end{proposition}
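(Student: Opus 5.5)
The plan has three parts: a pointwise bound on every cut, a direct computation for the star, and a strict-inequality argument for every other graph, where Lemma \ref{cheeger_nonzero} does most of the work. Throughout, $X$ ranges over nonempty proper subsets of $V$.

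For the inequality, I would use the identity noted above: $vol(X)=|\partial(X,\bar X)|+2|\partial X|$, and symmetrically for $\bar X$. Hence both $vol(X)$ and $vol(\bar X)$ are at least $|\partial(X,\bar X)|$, so each ratio in the definition of $h(G)$ is at most $1$. Taking the minimum gives $h(G)\le 1$. If $G$ is disconnected, then $h(G)=0<1$, so from now on I assume $G$ is connected. The same identity shows that a given $X$ has ratio exactly $1$ if and only if the side of smaller volume spans no edges, i.e.\ $\partial X=0$ or $\partial\bar X=0$ for that side.

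For the star $S_n$ with center $c$, I would check every cut directly. Let $Y$ be the side of the cut not containing $c$. Then $Y$ is a set of leaves, so $\partial Y=0$ and $vol(Y)=|Y|=|\partial(X,\bar X)|$. The side containing $c$ has volume $(n-1)+(\text{number of leaves with } c)$, which is at least $|Y|$. So the minimum volume equals the cut size, every ratio is $1$, and $h(S_n)=1$.

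For the converse I need a set $X$ with $\partial X\neq 0$ and $\partial\bar X\neq 0$, since then both volumes strictly exceed the cut and $h(G)<1$. There are two cases.
\begin{itemize}
\item If $G$ is connected with no vertex of degree $n-1$, Lemma \ref{cheeger_nonzero} supplies such an $X$ directly.
\item If some vertex $v$ has degree $n-1$ and $G$ is not a star, there is an edge $uw$ with $u,w\neq v$. Take $X=\{u,w\}$, so $\partial X\ge 1$. When $n\ge 4$, $\bar X$ contains $v$ together with at least one other vertex, which is adjacent to $v$, so $\partial\bar X\ge 1$.
\end{itemize}

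The main obstacle is the small-order boundary in the second case. For $n=3$ the only such graph is $K_3$, where every cut has ratio $2/2=1$, so $h(K_3)=1$ even though $K_3$ is not a star. I would therefore state the equality characterization for $n\ge 4$ and record $K_3$ as the exceptional equality case. I would also point out the convention that $K_2=S_2$, which is consistent with the statement.
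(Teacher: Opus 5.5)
Your proof is correct and follows the paper's route: Lemma \ref{cheeger_nonzero} when no vertex has degree $n-1$, the set $X=\{u,w\}$ for an edge avoiding the dominating vertex otherwise, and a direct check of every cut for $S_n$. Your extra requirement that $\partial\bar X\neq 0$ is exactly the step the paper skips (it bounds the ratio by $\frac{|\partial(X,\bar X)|}{|\partial(X,\bar X)|+2}$ using $vol(X)$ alone rather than $\min\{vol(X),vol(\bar X)\}$), and your exception is a genuine counterexample to the statement as written: $h(K_3)=1$ while $K_3\neq S_3$, so the equality characterization needs $n\ge 4$ or must also list $K_3$.
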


\begin{proof}
    By definition of Cheeger constant and Lemma \ref{cheeger_nonzero}, we see that if a graph is connected and none of the vertices have degree $n-1$, then the Cheeger constant is strictly less than 1. If the graph is disconnected, then $h(G)=0$. So we focus on the equality condition by focusing on graphs that have at least one vertex with degree $n-1$.  
    
    ($\implies)$ We show the contrapositive of the statement, namely that if $G\neq S_n$, then $h(G)<1$. We have that the graph $G$ has at least one vertex with degree $n-1$, and since $G\neq S_n$, we see that there have to be at least two vertices $x$ and $y$ with an edge between them. Then for a set $X$=\{$x,y$\}, we see that $h(G)=\frac{\partial(X,\bar X)}{\partial(X,\bar X)+2\partial  X}=\frac{\partial(X,\bar X)}{\partial(X,\bar X)+2}<1$. So we see that if $G$ has at least 1 vertex with degree $n-1$, but is not the star graph, then $h(G)<1$.  
    
    ($\impliedby)$ Now, we assume that $G=S_n$, meaning $G$ has exactly one vertex $y$ with degree $n-1$ and every other vertex is connected with just $y$ and not with any other vertex. We deal with two cases.  
    
    Case 1: The set $X$ does not have the dominating vertex. Then we see that every vertex that is contained in $X$ can only be connected to the dominating vertex and no edge can exist between any number of vertices contained in the set $X$. This means that $\partial X=0$, so $h(G)=\frac{\partial(X,\bar X)}{\partial(X,\bar X)+\partial X}=\frac{\partial(X,\bar X)}{\partial(X,\bar X)}=1$. 
    
    Case 2: The set $X$ includes the dominating vertex. Now, we see that the dominating vertex has degree $n-1$ and the remaining $n-1$ vertices all have degree 1, meaning adding even one extra vertex in $X$ on top of the dominating vertex results in the set $\bar X$ containing the smaller volume. Since $\bar X$ does not contain the dominating vertex, we see that $h(G)=1$ by Case 1. If $X$ contains just the dominating vertex, then we see that $\partial (X,\bar X)=n-1$ and $\partial X=0$, so substituting these values in the formula for $h(G)$ yields $h(G)=1$. 
\end{proof}
\begin{figure}
    \centering
    \begin{tikzpicture}[
every node/.style={circle, fill=pink!40, draw=pink!70!black,
inner sep=0pt, minimum size=6pt},
every edge/.style={gray!70, thick}
]
\begin{scope}
\node (center) at (0,0) {};

\foreach \i in {1,...,5} {
  \node (v\i) at ({90 + (\i-1)*72}:1.5cm) {};
  \draw (center) -- (v\i);
}
\end{scope}

\begin{scope}[xshift=4cm]

\node (0) at (.5,0) {};
\node (1) at (-.5,0) {};

\node (2) at (1.3,-1) {};
\node (3) at (-1.3,-1) {};
\node (4) at (-1.5,.7) {};
\node (5) at (-.5,1.5) {};
\node (6) at (1.5,.7) {};
\node (7) at (0,-1.5) {};
\node (8) at (.5,1.5) {};

\draw (0) -- (1);
\draw (0) -- (2);
\draw (0) -- (4);
\draw (0) -- (3);
\draw (0) -- (5);
\draw (0) -- (6);
\draw (0) -- (7);
\draw (0) -- (8);

\draw (1) -- (2);
\draw (1) -- (3);
\draw (1) -- (4);
\draw (1) -- (5);
\draw (1) -- (6);
\draw (1) -- (7);
\draw (1) -- (8);

\end{scope}

\begin{scope}[xshift=8cm]
\node (2) at (0,1) {};
\node (0) at (-.5,0) {};
\node (1) at (.5,0) {};

\node (4) at (-1.25,1.5) {};
\node (6) at (1.25,1.5) {};
\node (3) at (-1.5,-1) {};
\node (5) at (1.5,-1) {};

\draw (2) -- (0);
\draw (2) -- (1);
\draw (2) -- (4);
\draw (2) -- (6);
\draw (2) -- (3);
\draw (2) -- (5);

\draw (0) -- (1);
\draw (0) -- (3);
\draw (0) -- (4);
\draw (0) -- (5);
\draw (0) -- (6);

\draw (1) -- (5);
\draw (1) -- (6);
\draw (1) -- (3);
\draw (1) -- (4);

\end{scope}

\end{tikzpicture}
\caption{The star graph on 6 vertices, $S_6$; The generalized stars, $K_2 \vee E_7$ and $K_3 \vee E_4$}
\end{figure}


Since the $S_n$ has a vertex with degree $n-1$, its complement is disconnected. We identify graphs similar to $S_n$ to find graphs with high Cheeger constants. Notably, we look at graphs with very highly connected vertices and adjacent lower degree vertices. We identified one particular family we call the generalized star graph. It has the form $G=K_n \vee E_s$, where $\vee$ means the join of these two graphs so every vertex in $E_s$ is adjacent to every vertex in $K_n$. From this notation, $r+s = n$. Each vertex in $K_n$ is connected to every other vertex in the graph so $G^c$ is disconnected. In deriving our formula, we will use our parameters of $n$, $r$, and $s$ related to our set $X$ as parameters. By definition, the case $r=1$ is the star graph, the case $r=0$ is the empty graph $E_n$, and the case $s=1$ is $K_n$.



        From observation, $K_3 \vee E_{n-3}$ had the highest Cheeger constant among the generalized star graph family. As can be seen in the condition when $n \mod 3=0$, this tends to $\frac{5}{9}$ as $n \rightarrow\infty$. We classify these graphs separately. When $G=K_3 \vee E_{n-3}$, we see that 
    
    \[h(G)=\begin{cases}
      \frac{2 + \lfloor\frac{n-3}{3}\rfloor +2\lceil\frac{2(n-3)}{3}\rceil}{n-1 + 3(\lceil\frac{2(n-3)}{3}\rceil)} & \text{if $n$ mod 3 = 1.}\\
      \frac{2 + \lceil\frac{n-3}{3}\rceil +2\lfloor\frac{2(n-3)}{3}\rfloor}{n-1 + 3\lfloor\frac{2(n-3)}{3}\rfloor} & \text{if $n$ mod 3 = 2.}\\
      \frac{5n-9}{9n-21} & \text{if $n$ mod 3 = 0.}
    \end{cases}\]



    There is another family of graphs that tended to yield higher Cheeger constants. This was the graph $G$ which consisted of the complete graph on $n-1$ vertices with the remaining vertex connected to exactly one vertex on the complete graph. This graph has a Cheeger constant that can be denoted by the formula $h(G)=\frac{\lceil\frac{n}{2}\rceil \lfloor\frac{n-2}{2}\rfloor}{n+\lfloor\frac{n-4}{2}\rfloor(n-2)}$.

    Looking at the data in Figure \ref{fig:cheeger_plots}, the natural Nordhaus-Gaddum question seems to be $\max \{h(G),h(G^c)\}$. Ignoring $S_n$, the plots show a box where all graphs not equal to $S_n$ are bounded within some of these high performing graphs. The structure of the plots lends itself better to this type of upper bound. $\max \{h(G),h(G^c)\}$ gives us the value of the edges of this box. We see that Cheeger constant on this condition could ideally be bounded by the various families that we were able to work with and based off the formulas that we were able to come up. Considering the high results $K_3 \vee E_{n-3}$, we make the following conjecture:
    \begin{conjecture}
        If $G \neq S_n$, then $\max \{h(G),h(G^c)\} \leq h(K_3 \vee E_{n-3})$.
    \end{conjecture}
    

\subsection{Normalized Laplacian Eigenvalues}

With the Cheeger constant offering leads for the normalized Laplacian matrix, we now turn our attention to $\lambda_2(G)$. In Figure \ref{fig:lapl_plots}, we create a plot for normalized Laplacian eigenvalue Nordhaus-Gaddum problem. We focus first on $\max\{\lambda_2(G), \lambda_2(G^c)\}$.

\begin{figure}[h]
    \includegraphics[width=0.3\linewidth]{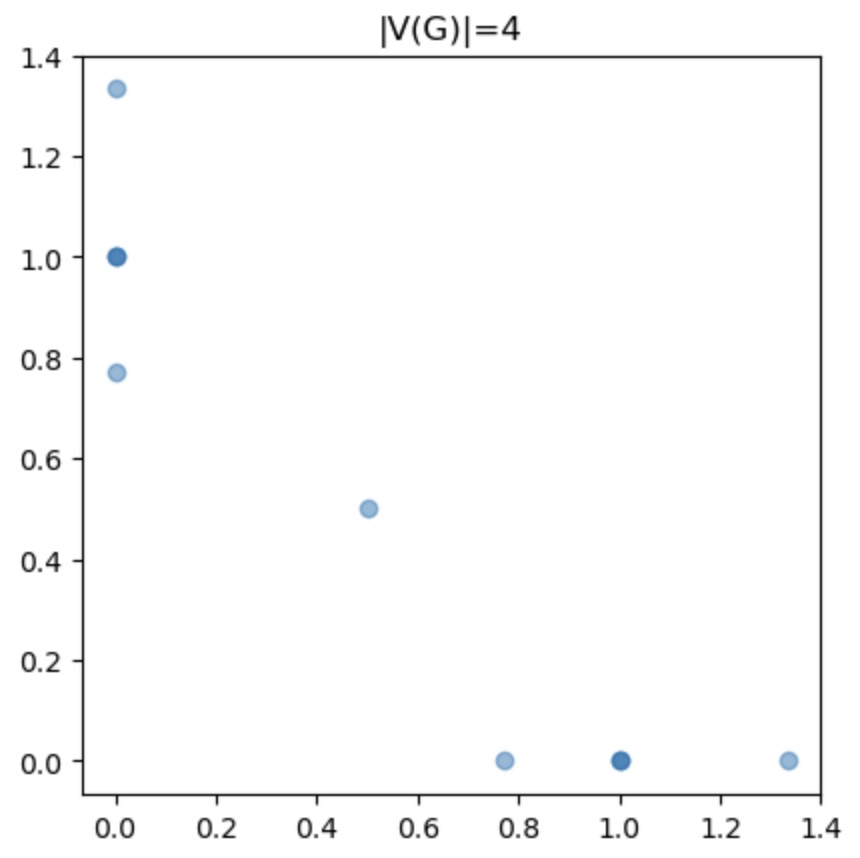}
        \centering
    \includegraphics[width=0.3\linewidth]{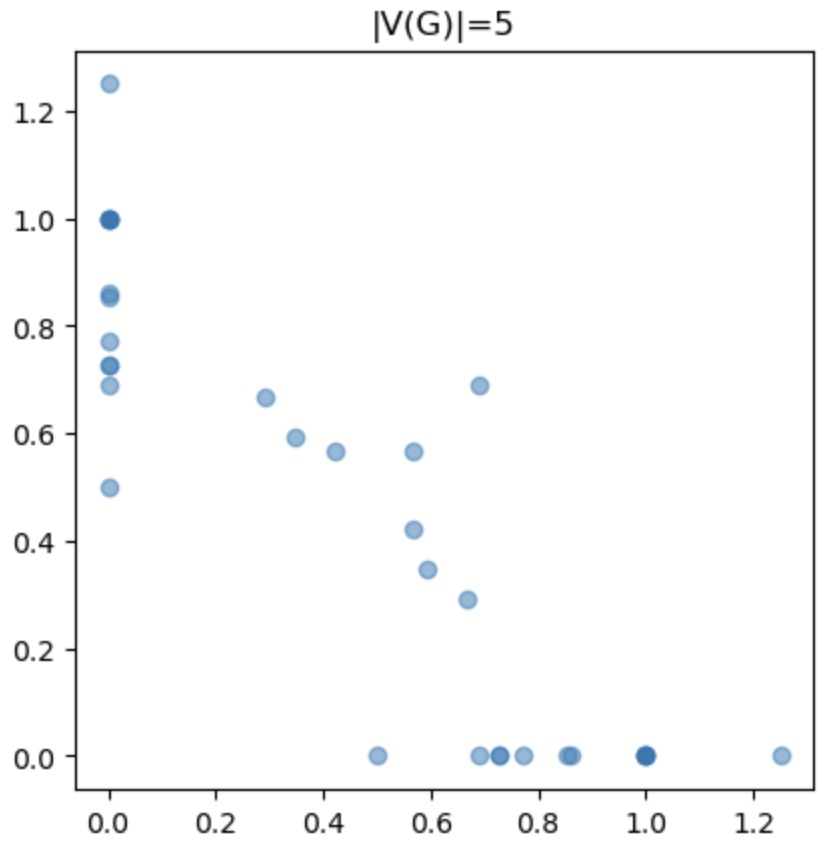}
    \includegraphics[width=0.3\linewidth]{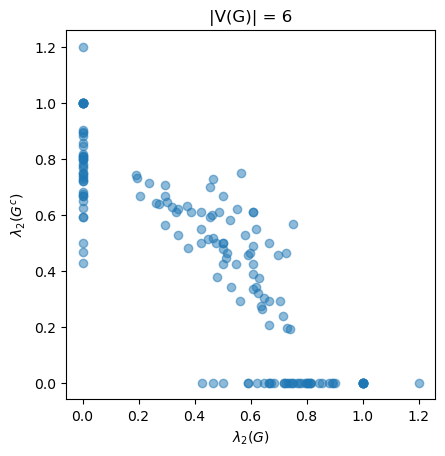}
    
    \includegraphics[width=0.3\linewidth]{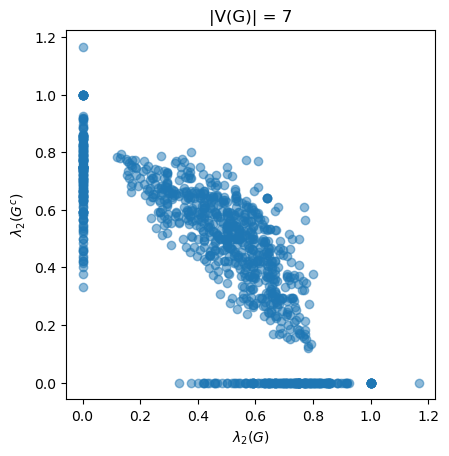}
    \centering
    \includegraphics[width=0.3\linewidth]{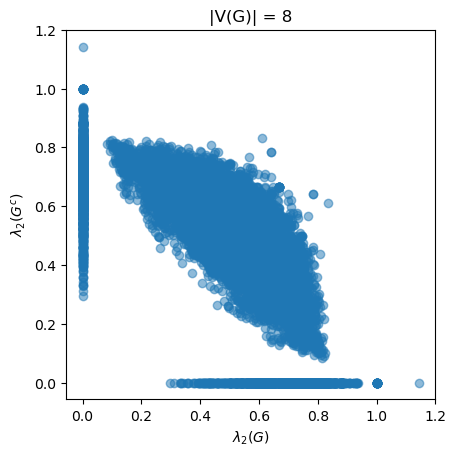}
    \includegraphics[width=0.3\linewidth]{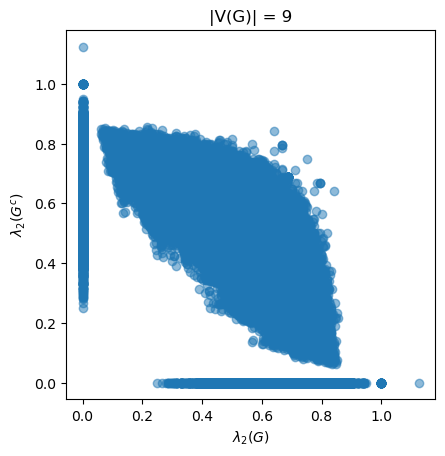}
    \caption{Plots of $\lambda_2(G) + \lambda_2(G^c)$ up to isomorphism for $n=4,5,6,7,8,9$}\label{fig:lapl_plots}
\end{figure}

\begin{proposition}
        If $G=K_n$, then $\lambda_2(G)=\frac{n}{n-1}$. If $G \neq K_n$, then $\lambda_2(G) \leq 1$.
\end{proposition}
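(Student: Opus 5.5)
For $K_n$ we compute directly. Every vertex has degree $n-1$, so $D=(n-1)I$ and
\[
\mathcal{L}(K_n)=\frac{1}{n-1}L(K_n)=\frac{1}{n-1}\bigl(nI-J\bigr).
\]
The matrix $nI-J$ has eigenvalue $0$ on $\mathbf{1}$ and eigenvalue $n$ on the orthogonal complement, which has dimension $n-1$. Hence every nonzero eigenvalue of $\mathcal{L}(K_n)$ equals $\frac{n}{n-1}$, and in particular $\lambda_2(K_n)=\frac{n}{n-1}$.

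For $G\neq K_n$ we use the Rayleigh quotient characterization of $\lambda_2$. Writing $g=D^{1/2}f$, the eigenvector of $\mathcal{L}$ for eigenvalue $0$ is $D^{1/2}\mathbf{1}$, and
\[
\lambda_2(G)=\min_{f\neq 0,\ \sum_v f(v)d_v=0}\frac{\sum_{uv\in E}(f(u)-f(v))^2}{\sum_v f(v)^2 d_v}.
\]
Vertices of degree $0$ need separate handling, since $D^{-1/2}$ is then undefined. We adopt the usual convention (Chung) that $\mathcal{L}$ has zero rows and columns there. With that convention, if $G$ has an isolated vertex or is disconnected, then $0$ has multiplicity at least $2$, so $\lambda_2=0\le 1$. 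We may therefore assume $G$ is connected with all degrees positive.

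Since $G\neq K_n$, there are two nonadjacent vertices $a,b$. Define $f$ by $f(a)=d_b$, $f(b)=-d_a$, and $f=0$ elsewhere. Then
\[
\sum_v f(v)d_v=d_bd_a-d_ad_b=0,
\]
so $f$ is an admissible test vector. Because $a\not\sim b$, no edge has both endpoints in $\{a,b\}$. Each edge at $a$ therefore contributes $d_b^2$ to the numerator, and each edge at $b$ contributes $d_a^2$, giving
\[
\text{numerator}=d_a d_b^2+d_b d_a^2=d_ad_b(d_a+d_b).
\]
The denominator is
\[
d_b^2d_a+d_a^2d_b=d_ad_b(d_a+d_b).
\]
The quotient is exactly $1$, so $\lambda_2(G)\le 1$.

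The only delicate step is choosing the test vector so that it is orthogonal to $D^{1/2}\mathbf{1}$ in the weighted sense. The nonadjacency of $a$ and $b$ is precisely what removes the cross term $(f(a)-f(b))^2$; for $K_n$ that cross term is present and pushes the quotient above $1$, matching the value $\frac{n}{n-1}$ computed above.
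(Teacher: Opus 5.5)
Your proof is correct and is essentially the argument the paper relies on. The paper gives no argument of its own and simply defers to Chung's book \cite{Fanchungbook}, where $\lambda_2(K_n)=\frac{n}{n-1}$ is computed directly and $\lambda_2\le 1$ for non-complete $G$ comes from your test function $f(a)=d_b$, $f(b)=-d_a$ on a nonadjacent pair, plugged into the Rayleigh quotient.
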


    \begin{proof}
        The proof is outlined in \cite{Fanchungbook}.
    \end{proof}


    
    Using our results from the Cheeger constant, we identified some of the graphs where $\lambda_2(G) =1$. The proof of the following we leave to the appendix. 
    
    \begin{proposition} The normalized Laplacian spectrum of the Generalized Star Graph is $0$, $1$, $\frac{n}{n-1}, \frac{s+n-1}{n-1}$ with multiplicity $1$, $r-1$, $s-1$, and $1$ respectively. \label{prop:gen_star}
    \end{proposition}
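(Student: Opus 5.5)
The plan is to write $\mathcal{L} = I - D^{-1/2}AD^{-1/2}$ for $G = K_r \vee E_s$ and build an explicit eigenbasis adapted to the bipartition $V = V_K \cup V_E$. A vertex of $V_K$ has degree $n-1$ and a vertex of $V_E$ has degree $|V_K|$. Since $\mathcal{L}$ is symmetric, it suffices to exhibit $n$ mutually orthogonal eigenvectors and read off their eigenvalues.

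\textbf{Step 1 (vectors living on one part and summing to zero).} Take $x$ supported on $V_E$ with $\sum x_v = 0$. Every vertex of the graph is adjacent either to all of $V_E$ or to none of it, so $Ax = 0$, and hence $D^{-1/2}AD^{-1/2}x = 0$. This gives $\mathcal{L}x = x$, eigenvalue $1$, on a space of dimension $|V_E|-1$.

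Next take $y$ supported on $V_K$ with $\sum y_v = 0$. On $V_K$ we have $(Ay)_u = \sum_{w\in V_K\setminus\{u\}} y_w = -y_u$, while on $V_E$ we get $(Ay)_v = \sum_{w \in V_K} y_w = 0$. Since the degree is constant, equal to $n-1$, on $V_K$, this gives $D^{-1/2}AD^{-1/2}y = -y/(n-1)$. Hence $\mathcal{L}y = \frac{n}{n-1}y$ on a space of dimension $|V_K|-1$.

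\textbf{Step 2 (the two remaining eigenvalues).} The orthogonal complement of these vectors consists of vectors that are constant on each part. There $\mathcal{L}$ acts as a $2\times 2$ quotient matrix of the equitable partition. One eigenvalue is $0$, with eigenvector $D^{1/2}\mathbf{1}$. Rather than diagonalizing, I would obtain the other eigenvalue $\lambda$ from the trace: since $\operatorname{tr}\mathcal{L} = n$ (there are no loops), $\lambda$ equals $n$ minus the sum of all eigenvalues found so far. Substituting the multiplicities from Step 1 gives a closed form in $n$ and the part sizes. I will then check that this closed form agrees with $\frac{s+n-1}{n-1}$ and with a direct $2\times 2$ determinant computation.

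\textbf{Main obstacle: matching labels.} The expected difficulty is purely one of bookkeeping, namely matching the sizes $|V_K|$ and $|V_E|$ to the letters $r$ and $s$ as the statement intends. The introduction of the family writes $K_n \vee E_s$ with $r+s=n$, so the role of $r$ is not pinned down in the paper. In the stated multiplicity list, eigenvalue $1$ has multiplicity $r-1$ and $\frac{n}{n-1}$ has multiplicity $s-1$. Step 1 therefore forces $r = |V_E|$ and $s = |V_K|$ for the multiplicities.

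The trace computation in Step 2, however, gives $\lambda = \frac{n+|V_E|-1}{n-1}$. This matches $\frac{s+n-1}{n-1}$ only when $s = |V_E|$. I will carry out the computation with both labelings and verify each eigenvalue and multiplicity against the trace and against a small case such as $K_3\vee E_4$ from the figure.

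If the two readings cannot be reconciled, the honest conclusion of the plan is the following. The four eigenvalues and the structure of the eigenspaces are exactly as in Steps 1 and 2. Eigenvalue $1$ has multiplicity (independent part size) $-1$, and $\frac{n}{n-1}$ has multiplicity (clique size) $-1$. The top eigenvalue is $\frac{n+(\text{independent part size})-1}{n-1}$. The statement then holds as written once $r$ and $s$ are read in a single consistent way, and the proof consists of those two steps.
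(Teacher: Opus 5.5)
Your argument is correct and is essentially the paper's proof. Zero-sum vectors supported on one part give the eigenvalues $1$ and $\frac{n}{n-1}$, and the trace then gives the last eigenvalue $\frac{n+s-1}{n-1}$; your null vector $D^{1/2}\mathbf{1}$ is also more accurate than the paper's appendix, which uses the all-ones vector, and that is not a null vector of $\mathcal{L}$ unless $G$ is regular.

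Your suspicion about the labels is right, but the paper does pin down the roles ($r=1$ is the star and $s=1$ is $K_n$, so $r$ is the clique size and $s$ the independent-set size), and its own proof gets multiplicity $s-1$ for $1$ and $r-1$ for $\frac{n}{n-1}$. So the statement simply has these two multiplicities transposed, and no consistent reading of $r,s$ makes it true as written unless $r=s$; your closing sentence should say ``swap the two multiplicities'' rather than ``read $r$ and $s$ consistently.''
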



 We will now look at graphs that appear to have large $\lambda_2(G) + \lambda_2(G^c)$.
 This yielded some interesting families with the potential to yield good bounds on this condition. We will analyze a certain family on odd numbers of vertices and a family of graphs on even numbers of vertices. From these two families, we identify potential Nordhaus-Gaddum bounds that can be conjectured.

For an odd number of vertices, we look at the $\frac{n-1}{2}$-regular self complementary graphs. 
Of the graphs we analyzed experimentally, the Paley graph performed the best. The Paley graph is a strongly regular graph parameterized by $(n, \frac{n-1}{2}, \frac{n-5}{4}, \frac{n-1}{4})$. By being self-complementary, $\lambda_2(G) = \lambda_2(G^c)$ and using the formulas around strongly regular graphs, we know $\lambda_2(G) = \frac{n-\sqrt{n}}{n-1}$. Thus, $\lambda_2(G)+\lambda_2(G^c)= \frac{2(n-\sqrt{n})}{n-1}$. Looking at its vertex long term behavior $n \rightarrow \infty$, $\lambda_2(G)+\lambda_2(G^c)$ converges to 2. This aligns with what we understood with the Laplacian Nordhaus-Gaddum upper bound. The normalized Laplacian normalizes each entry of the Laplacian matrix by the degree of the shared vertices. So when looking at the strongly regular graph $G$, its eigenvalues will be normalized by the degree. $G^c$ will be strongly regular as well but without loss of generality if the degree is high in $G$ and low in $G^c$, then $\lambda_2(G)$ will be more penalized and $\lambda_2(G^c)$ will have a higher growth.

We now define a family of graphs with even numbers of vertices that appears to be optimal as follows: $G$ is composed of a complete graph of $\frac{n}{2}$ vertices and the remaining $\frac{n}{2}$ vertices were each connected to exactly 1 vertex in the complete graph. The complement also consisted of a complete graph on $\frac{n}{2}$ vertices, but each of the remaining $\frac{n}{2}$ vertices are connected to all but 1 vertex on the complete graph, with no two vertices being connected to the exact same set of vertices. See Figure \ref{fig:optimal6}.

        \begin{figure}
        \centering
            \begin{tikzpicture}[
every node/.style={circle, fill=pink!40, draw=pink!70!black,
inner sep=0pt, minimum size=6pt},
every edge/.style={gray!70, thick}
]
        
        \node (0) at (3, 1)   {};
        \node (1) at (1.5, 2) {};
        \node (2) at (3, 3)   {};
        \node (3) at (4, 0) {};
        \node (4) at (0, 2)   {};
        \node (5) at (4, 4.25) {};
        
        \draw (0) -- (1);
        \draw (0) -- (2);
        \draw (1) -- (2);
        \draw (0) -- (3);
        \draw (1) -- (4);
        \draw (2) -- (5);
        
        \end{tikzpicture}
          \begin{tikzpicture}[
every node/.style={circle, fill=pink!40, draw=pink!70!black,
inner sep=0pt, minimum size=6pt},
every edge/.style={gray!70, thick}
]

\node (0) at (4, 4)  {};
\node (1) at (2, 1)  {};
\node (2) at (0, 4)  {};
\node (3) at (1, 2.5){};
\node (4) at (2, 4)  {};
\node (5) at (3, 2.5)    {};

\draw (0) -- (4);
\draw (0) -- (5);
\draw (1) -- (3);
\draw (1) -- (5);
\draw (2) -- (3);
\draw (2) -- (4);
\draw (3) -- (4);
\draw (3) -- (5);
\draw (4) -- (5);

\end{tikzpicture}
          \caption{The focus graph (left) and its complement on 6 vertices}
          \label{fig:optimal6}
        \end{figure}
        
        We focus on the spectrum on these graphs. We see that the normalized Laplacian matrices for these graphs are of the form 
        \[\mathcal{L}(G)=\begin{bmatrix}
            I_{\frac{n}{2}} && -\sqrt{\frac{2}{n}}I_{\frac{n}{2}}   \\
            -\sqrt{\frac{2}{n}}I_{\frac{n}{2}} && -\frac{2}{n}J_{\frac{n}{2}}+\frac{n+2}{n}I_{\frac{n}{2}}  \\
        \end{bmatrix} \]
        \[\mathcal{L}(G^c)=\begin{bmatrix}
            -\frac{1}{n-2}J_{\frac{n}{2}}+\frac{n-1}{n-2}I_{\frac{n}{2}} && -\frac{\sqrt{2}}{n-2}(J_{\frac{n}{2}}-I_{\frac{n}{2}})  \\
            -\frac{\sqrt{2}}{n-2}(J_{\frac{n}{2}}-I_{\frac{n}{2}}) && I_{\frac{n}{2}}   \\
        \end{bmatrix}\]
         We compute the spectrum for both these matrices separately. 
\begin{proposition}
    The eigenvalues of the focus graph $G$ are 0, $\frac{n+1-\sqrt{2n+1}}{n}$, $\frac{n+1+\sqrt{2n+1}}{n}$, $\frac{n+2}{n}$ with multiplicity 1, $\frac{n-2}{2}$, $\frac{n-2}{2}$, and 1 respectively. For $G^c$, the eigenvalues are 0, $\frac{n-3}{n-2}$, $\frac{n}{n-2}$, $\frac{3}{2}$ with multiplicities 1, $\frac{n-2}{2}$, $\frac{n-2}{2}$, and 1.
    \label{prop:focus_G}
\end{proposition}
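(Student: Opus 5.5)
The plan is to exploit the block structure of the two normalized Laplacians displayed just before Proposition~\ref{prop:focus_G}. Every block is a linear combination of $I_{\frac n2}$ and $J_{\frac n2}$, so $\mathbb{R}^n$ splits into two invariant pieces:
\begin{itemize}
\item $W_0$, consisting of vectors $(x,y)$ with $x,y\perp \mathbf{1}$; its dimension is $2(\frac n2-1)=n-2$, and $J$ acts on it as $0$;
\item $W_1=\mathrm{span}\{(\mathbf{1},0),(0,\mathbf{1})\}$, of dimension $2$, on which $J$ acts as multiplication by $\frac n2$.
\end{itemize}
On each piece the matrix reduces to a $2\times 2$ matrix in the coefficients. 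On $W_0$ this $2\times 2$ matrix acts on the pair $(x,y)$, so each of its eigenvalues appears with multiplicity $\frac n2-1=\frac{n-2}{2}$. On $W_1$ each eigenvalue appears once. This gives all $n$ eigenvalues.

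First I would briefly confirm the displayed matrices from the degrees.
\begin{itemize}
\item In the focus graph $G$, the clique vertices have degree $\frac n2$ and the pendant vertices have degree $1$. (In the display the pendant block comes first.)
\item In $G^c$, the former clique vertices have degree $\frac n2-1+\frac n2-1=n-2$, and the former pendant vertices have degree $\frac n2 -1$ (toward the clique they are now adjacent to all but one vertex, and they form a clique among themselves).
\end{itemize}
The off-diagonal entries then follow from $-1/\sqrt{d_ud_v}$. I do not expect this check to be an obstacle.

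For $\mathcal{L}(G)$ with $a=\sqrt{2/n}$:
\begin{itemize}
\item On $W_0$ the reduced matrix is $\begin{bmatrix}1&-a\\-a&\frac{n+2}{n}\end{bmatrix}$. Its trace is $\frac{2n+2}{n}$ and its determinant is $\frac{n+2}{n}-\frac 2n=1$. The eigenvalues are therefore $\frac{(n+1)\pm\sqrt{(n+1)^2-n^2}}{n}=\frac{n+1\pm\sqrt{2n+1}}{n}$, each with multiplicity $\frac{n-2}{2}$.
\item On $W_1$ the reduced matrix is $\begin{bmatrix}1&-a\\-a&\frac 2n\end{bmatrix}$. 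Its determinant is $0$ and its trace is $\frac{n+2}{n}$, giving the simple eigenvalues $0$ and $\frac{n+2}{n}$.
\end{itemize}

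For $\mathcal{L}(G^c)$ with $b=\frac{\sqrt2}{n-2}$:
\begin{itemize}
\item On $W_0$ the off-diagonal blocks act as $+b$, so the reduced matrix is $\begin{bmatrix}\frac{n-1}{n-2}&b\\ b&1\end{bmatrix}$. Its trace is $\frac{2n-3}{n-2}$ and its determinant is $\frac{(n-1)(n-2)-2}{(n-2)^2}=\frac{n(n-3)}{(n-2)^2}$. The roots are $\frac{n}{n-2}$ and $\frac{n-3}{n-2}$, each with multiplicity $\frac{n-2}{2}$.
\item On $W_1$ the top-left entry becomes $-\frac{n}{2(n-2)}+\frac{n-1}{n-2}=\frac12$. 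The off-diagonal entry becomes $-b(\frac n2-1)=-\frac{1}{\sqrt2}$. The reduced matrix has trace $\frac32$ and determinant $\frac12-\frac12=0$, so the eigenvalues are $0$ and $\frac32$.
\end{itemize}

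The only step needing care is the bookkeeping: getting the sign of the $-(J-I)$ block right on each subspace, and checking that the multiplicities sum to $n$. The latter holds since $2\cdot\frac{n-2}{2}+2=n$. The rest is routine trace and determinant computation.
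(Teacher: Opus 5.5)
Your proof is correct. It rests on the same block structure as the paper's proof but organizes the argument differently.

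The paper writes down explicit vectors such as $(-x_n,x_n,0,\dots,0,-1,1,0,\dots,0)$ with $x_n=\frac{1+\sqrt{2n+1}}{\sqrt{2n}}$. This is exactly an eigenvector of your reduced $W_0$ matrix tensored with $e_2-e_1$. The paper verifies these vectors by direct multiplication and asserts multiplicity $\frac{n-2}{2}$ by ``swapping'' positions. It then recovers the last nonzero eigenvalue ($\frac{n+2}{n}$, resp.\ $\frac32$) from $\mathrm{tr}\,\mathcal{L}=n$.

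Your splitting $\mathbb{R}^n=W_0\oplus W_1$, with $W_0\cong\mathbb{R}^2\otimes\mathbf{1}^\perp$, buys two things. It makes the multiplicity count rigorous, whereas the swapping step tacitly needs the swapped vectors to span an $(\frac n2-1)$-dimensional space. It also produces the remaining two eigenvalues directly instead of through the trace. The paper's route, in exchange, exhibits eigenvectors explicitly. Your trace and determinant computations for all four reduced $2\times2$ matrices check out.

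Two small remarks. First, in your degree check for $G^c$ the labels are swapped. The former pendant vertices (now a clique, adjacent to all but one former clique vertex) have degree $n-2$, and the former clique vertices (now independent) have degree $\frac n2-1$. This is harmless because you compute with the displayed matrix. Second, at $n=6$ one has $\frac{n}{n-2}=\frac32$, so $\frac32$ is then an eigenvalue of $\mathcal{L}(G^c)$ of multiplicity $3$. Neither your argument nor the paper's flags this coincidence, although both give the correct multiset of eigenvalues.
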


\begin{proof}
    The proof is similar to Proposition \ref{prop:gen_star} and so we leave the technical details to the appendix.
\end{proof}
        
    Looking at this family, we see $\lambda_2(G) = \frac{n+1-\sqrt{2n+1}}{n}$ and $\lambda_2(G^c) = \frac{n-3}{n-2}$. Thus,  $\lambda_2(G)+\lambda_2(G^c)$=$\frac{n+1-\sqrt{2n+1}}{n}+\frac{n-3}{n-2}$ is the position at which a maximum is achieved on graphs with even degrees. As $n$ gets larger, we see that both $\lambda_2(G)$ and $\lambda_2(G^c)$ approach 1. However, it can be seen $\lambda_2(G)$ and $\lambda_2(G^c)$ will never equal 1.
    Combining both, we see that $\lambda_2(G)+\lambda_2(G^c)<2$, and converges to 2 as $n\rightarrow\infty$.

    We summarize our discussion in the following conjecture.

    \begin{conjecture}
        If $G$ has $n$ vertices, than 
        \[\lambda_2(G)+\lambda_2(G^c)\leq 2-O\left(\frac{1}{\sqrt{n}}\right).\]
    \end{conjecture}

    Finally, we remark that many of the graphs we have found that appear to have the largest Nordhaus-Gaddum sum for the normalized Laplacian are not regular, which is in stark contract to the case with the combinatorial Laplacian.

\section{Appendix}

Here are the technical details of Proposition \ref{prop:gen_star}.
\begin{proof}
        The normalized Laplacian of the generalized star graph in block form be
    \[ M = \begin{bmatrix}
    \frac{n}{n-1}I_{r} - \frac{1}{n-1}J_{r} & \frac{-1}{\sqrt{(n-1)r}}J_{r\times s} \\
    \frac{-1}{\sqrt{(n-1)r}}J_{s\times r} & I_{s}
    \end{bmatrix}.\]
    The all 1's vector yields the eigenvalue 0.  \\
     Now consider any vector $x = \begin{bmatrix}
         0_r  \\
         x_{r+1}  \\
         .  \\
         .  \\
         x_{r+s}
     \end{bmatrix}$, such that $\sum^{r+s}_{r+1}x_i$=0. \\ Then 
     $Mx$ = $\begin{bmatrix}
         \sum_{r+1}^n\frac{-1}{\sqrt{(n-1)r}}x_i,
         , \dots,
         \sum_{i=r+1}^n\frac{-1}{\sqrt{(n-1)r}}x_i, 
         x_{r+1}, \dots
         x_{r+s}
     \end{bmatrix}^T=\begin{bmatrix}
         0, 0,\dots,
         x_{r+1},\dots,
         x_{r+s}
     \end{bmatrix}^T=x$  \\
     Thus the associated eigenvalue is 1. Since the space of such vectors has dimension $s-1$, the eigenvalue 1 has multiplicity $s-1$.  \\
     Now consider vector $y = \begin{bmatrix}
         y_1  \\
         y_2  \\
         .  \\
         .  \\
         .  \\
         y_r  \\
         0_s
     \end{bmatrix}$, where $\sum^r_1 y_i$=0. \\ Then 
    $My = \begin{bmatrix}
        \frac{n}{n-1}y_1+\sum_{1}^{r}\frac{-1}{n-1}y_{j},\dots,\frac{n}{n-1}y_r+\sum_{1}^{r}\frac{-1}{n-1}y_{j}, 0_s\end{bmatrix}^T$=$\frac{n}{n-1}\begin{bmatrix}
        y_1,\dots,
        y_r,
        0_s
    \end{bmatrix}^T=\frac{n}{n-1}y$.
    
    In this case, we see that the multiplicity for the eigenvalue $\frac{n}{n-1}$ will be $r-1$.  
    
    There is one more eigenvalue left, which we will compute using the trace. 
    Now, $tr(M)=\sum_1^n \lambda_i=1(s-1)+\frac{n}{n-1}(r-1)+\lambda$. We know that $tr(M)=n$. This shows that $\lambda_j=n-1(s-1)-\frac{n}{n-1}(r-1)=\frac{(n-s+1)(n-1)-n(r-1)}{n-1}=\frac{(n-s+1)(n-1)-n(n-s-1)}{n-1}=\frac{n+s-1}{n-1}$. So we see that the final eigenvalue is $\frac{n+s-1}{n-1}$ with multiplicity 1.  
    
    From the spectrum, we see that the algebraic connectivity of the generalized star graph is $\lambda_{2} = 1$.
\end{proof}

Here are the technical details of the proof for Proposition \ref{prop:focus_G}.

\begin{proof}
    As 0 belongs in the spectrum of both $\mathcal{L}(G)$ and $\mathcal{L}(G^c)$, we focus on computing the remaining eigenvalues. First we compute the spectrum on \[\mathcal{L}(G)=\begin{bmatrix}
            I_{\frac{n}{2}} && -\sqrt{\frac{2}{n}}I_{\frac{n}{2}}   \\
            -\sqrt{\frac{2}{n}}I_{\frac{n}{2}} && -\frac{2}{n}J_{\frac{n}{2}}+\frac{n+2}{n}I_{\frac{n}{2}}  \\
        \end{bmatrix} \]
        \[\mathcal{L}(G^c)=\begin{bmatrix}
            -\frac{1}{n-2}J_{\frac{n}{2}}+\frac{n-1}{n-2}I_{\frac{n}{2}} && -\frac{\sqrt{2}}{n-2}(J_{\frac{n}{2}}-I_{\frac{n}{2}})  \\
            -\frac{\sqrt{2}}{n-2}(J_{\frac{n}{2}}-I_{\frac{n}{2}}) && I_{\frac{n}{2}}   \\
        \end{bmatrix}\]
    We let $x_n$=$\frac{1}{n}(\sqrt{\frac{n}{2}}+\sqrt{n^2+\frac{n}{2}})$ and choose a vector $x=\begin{bmatrix}
            -x_n                \\
             x_n                \\
             0_{\frac{n}{2}-2}  \\
            -1                  \\
             1                  \\
             0_{\frac{n}{2}-2}
        \end{bmatrix}$.  
    Then we see that  \\
    $\mathcal{L}(G)$x=$\begin{bmatrix}
            I_{\frac{n}{2}}\begin{bmatrix}
                -x_n \\
                 x_n \\
                 0_{\frac{n}{2}-2}
            \end{bmatrix} && -\sqrt{\frac{2}{n}}I_{\frac{n}{2}}\begin{bmatrix}
                -1  \\
                1   \\
                0_{\frac{n}{2}-2}
            \end{bmatrix}   \\
            -\sqrt{\frac{2}{n}}I_{\frac{n}{2}}\begin{bmatrix}
                -x_n  \\
                 x_n  \\
                 0_{\frac{n}{2}-2}
            \end{bmatrix} && -\frac{2}{n}J_{\frac{n}{2}}+\frac{n+2}{n}I_{\frac{n}{2}} \begin{bmatrix}
                -1  \\
                 1  \\
                 0_{\frac{n}{2}-2}
            \end{bmatrix}  \\
    \end{bmatrix}$=
    $\begin{bmatrix}
            \begin{bmatrix}
             -x_n  \\
              x_n  \\
              0_{\frac{n}{2}-2}
        \end{bmatrix}+\begin{bmatrix}
            \sqrt{\frac{2}{n}}  \\
           -\sqrt{\frac{2}{n}}  \\
            0_{\frac{n}{2}-2} 
        \end{bmatrix}  \\
        \begin{bmatrix}
             \sqrt{\frac{2}{n}}x_n  \\
            -\sqrt{\frac{2}{n}}x_n  \\
            0_{\frac{n}{2}-2}
        \end{bmatrix}+\begin{bmatrix}
            -1-\frac{2}{n}  \\
             1+\frac{2}{n}  \\
             0_{\frac{n}{2}-2}
        \end{bmatrix}
    \end{bmatrix}$\\=
    $\begin{bmatrix}
        -x_n+\sqrt{\frac{2}{n}}  \\
         x_n-\sqrt{\frac{2}{n}}  \\
         0_{\frac{n}{2}-2}       \\
         \sqrt{\frac{2}{n}}x_n-1-\frac{2}{n}  \\
        -\sqrt{\frac{2}{n}}x_n+1+\frac{2}{n}  \\
        0_{\frac{n}{2}-2}
    \end{bmatrix}$=$\begin{bmatrix}
        \frac{1}{\sqrt{2n}}-\frac{\sqrt{2n+1}}{\sqrt{2n}}  \\
       -\frac{1}{\sqrt{2n}}+\frac{\sqrt{2n+1}}{\sqrt{2n}}  \\
       0_{\frac{n}{2}-2}  \\
       -1-\frac{1}{n}+\frac{1}{n}\sqrt{2n+1}  \\
        1+\frac{1}{n}-\frac{1}{n}\sqrt{2n+1}  \\
        0_{\frac{n}{2}-2}
    \end{bmatrix}$=$\frac{n+1-\sqrt{2n+1}}{n}\begin{bmatrix}
        -x_n  \\
         x_n  \\
         0_{\frac{n}{2}-2}  \\
         -1  \\
          1  \\
          0_{\frac{n}{2}-2}
    \end{bmatrix}$=$\frac{n+1-\sqrt{2n+1}}{n}x$.  \\
    We see that this yields the eigenvalue $\frac{n+1-\sqrt{2n+1}}{n}$. For similar vectors obtained by swapping the location of $x_n$ and the concurrent 1, we see that this eigenvalue will have a multiplicity $\frac{n-2}{2}$. Now if we set $x_n$=$\frac{1}{n}(\sqrt{\frac{n}{2}}-\sqrt{n^2+\frac{n}{2}})$ and along the same vectors, this yields an eigenvalue $\frac{n+1+\sqrt{2n+1}}{n}$ with multiplicity $\frac{n-2}{2}$, leaving just one unknown value in the spectrum of $\mathcal{L}(G)$. We set this unknown value as $\lambda$. Now, tr($\mathcal{L}(G))=n$. Since tr(G)=$\sum_{i=1}^{n} \lambda_i$, we see that $n=\frac{(n+1)(n-2)}{n}$+$\lambda$ (the positive and negative $\sqrt{2n+1}$ will cancel out). Then we have $\lambda=n-\frac{(n+1)(n-2)}{n}=n-\frac{n^2-n-2}{n}$=$\frac{n+2}{n}$. So $\frac{n+2}{n}$ is also present in the spectrum with multiplicity 1.  From the spectrum, we see that for this family of graphs, $\lambda_2(G)$=$\frac{n+1-\sqrt{2n+1}}{n}$. Now we will focus on the spectrum of the complement.  \\
    Now, we choose a vector x=$\begin{bmatrix}
         \frac{1}{\sqrt{2}}  \\
        -\frac{1}{\sqrt{2}}  \\
        0_{\frac{n}{2}-2}    \\
        -1  \\
         1\\
         0_{\frac{n}{2}-2}
    \end{bmatrix}$ \\
    $\mathcal{L}(G^c)$x=$\begin{bmatrix}
            (-\frac{1}{n-2}J_{\frac{n}{2}}+\frac{n-1}{n-2}I_{\frac{n}{2}})\begin{bmatrix}
                 \frac{1}{\sqrt{2}}  \\
                -\frac{1}{\sqrt{2}}  \\
                0_{\frac{n}{2}-2}
            \end{bmatrix}  -\frac{\sqrt{2}}{n-2}(J_{\frac{n}{2}}-I_{\frac{n}{2}})\begin{bmatrix}
                -1  \\
                 1  \\
                 0_{\frac{n}{2}-2}
            \end{bmatrix}  \\
            -\frac{\sqrt{2}}{n-2}(J_{\frac{n}{2}}-I_{\frac{n}{2}})\begin{bmatrix}
                 \frac{1}{\sqrt{2}}  \\
                -\frac{1}{\sqrt{2}}  \\
                 0_{\frac{n}{2}-2}  \\
            \end{bmatrix}+I_{\frac{n}{2}}\begin{bmatrix}
                -1  \\
                 1  \\
                 0_{\frac{n}{2}-2}
            \end{bmatrix}
            \end{bmatrix}$=$\begin{bmatrix}
                \begin{bmatrix}
                    \frac{1}{\sqrt{2}}+\frac{1}{(n-2)\sqrt{2}}  \\
                   -\frac{1}{\sqrt{2}}-\frac{1}{(n-2)\sqrt{2}}  \\
                   0_{\frac{n}{2}-2}
                \end{bmatrix}+\begin{bmatrix}
                    -\frac{\sqrt{2}}{n-2}  \\
                     \frac{\sqrt{2}}{n-2}  \\
                     0_{\frac{n}{2}-2}  \\
                \end{bmatrix}  \\
                \begin{bmatrix}
                    \frac{1}{n-2}  \\
                   -\frac{1}{n-2}  \\
                    0_{\frac{n}{2}-2}  \\
                \end{bmatrix}+\begin{bmatrix}
                    -1  \\
                     1  \\
                     0_{\frac{n}{2}-2}
                \end{bmatrix}
            \end{bmatrix}$\\=$\begin{bmatrix}
                \frac{1}{\sqrt{2}}+\frac{1}{(n-2)\sqrt{2}}-\frac{\sqrt{2}}{n-2}  \\
                -\frac{1}{\sqrt{2}}-\frac{1}{(n-2)\sqrt{2}}+\frac{\sqrt{2}}{n-2}  \\
                0_{\frac{n}{2}-2}  \\
                \frac{1}{n-2}-1  \\
               -\frac{1}{n-2}+1  \\
               0_{\frac{n}{2}-2}
            \end{bmatrix}$=$\begin{bmatrix}
                \frac{n-2+1-2}{n-2}  \\
                \frac{-(n-2)-1+2}{n-2}  \\
                0_{\frac{n}{2}-2}  \\
                \frac{1-n+2}{n-2}  \\
               \frac{-1+n-2}{n-2}  \\
                0_{\frac{n}{2}-2}                
            \end{bmatrix}$=$\begin{bmatrix}
                \frac{n-3}{\sqrt{2}(n-2)}  \\
                \frac{3-n}{\sqrt{2}(n-2)}  \\
                0_{\frac{n}{2}-2}  \\
                \frac{3-n}{n-2}  \\
                \frac{n-3}{n-2}  \\
                0_{\frac{n}{2}-2}
            \end{bmatrix}$=$\frac{n-3}{n-2}$$\begin{bmatrix}
                \frac{1}{\sqrt{2}}  \\
               -\frac{1}{\sqrt{2}}  \\
                0_{\frac{n}{2}-2}  \\
                -1  \\
                 1  \\
                 0_{\frac{n}{2}-2}
            \end{bmatrix}$=$\frac{n-3}{n-2}x$.  \\
            As a result, we see that for similar vectors obtained by swapping the location of $-\frac{1}{\sqrt{2}}$ and the concurrent 1, the eigenvalue $\frac{n-3}{n-2}$ has multiplicity $\frac{n-2}{2}$. Now we choose a vector x=$\begin{bmatrix}
                -\sqrt{2}  \\
                 \sqrt{2}  \\
                 0_{\frac{n}{2}-2}  \\
                 -1  \\
                  1  \\
                  0_{\frac{n}{2}-2}  \\
            \end{bmatrix}$, then we see that \\
            $\mathcal{L}(G^c)x=\begin{bmatrix}
            (-\frac{1}{n-2}J_{\frac{n}{2}}+\frac{n-1}{n-2}I_{\frac{n}{2}})\begin{bmatrix}
                 -\sqrt{2}  \\
                  \sqrt{2}  \\
                0_{\frac{n}{2}-2}
            \end{bmatrix}  -\frac{\sqrt{2}}{n-2}(J_{\frac{n}{2}}-I_{\frac{n}{2}})\begin{bmatrix}
                -1  \\
                 1  \\
                 0_{\frac{n}{2}-2}
            \end{bmatrix}  \\
            -\frac{\sqrt{2}}{n-2}(J_{\frac{n}{2}}-I_{\frac{n}{2}})\begin{bmatrix}
                 -\sqrt{2}  \\
                  \sqrt{2}  \\
                  0_{\frac{n}{2}-2}  \\
            \end{bmatrix}+I_{\frac{n}{2}}\begin{bmatrix}
                -1  \\
                 1  \\
                 0_{\frac{n}{2}-2}
            \end{bmatrix}
            \end{bmatrix}$=$\begin{bmatrix}
                \begin{bmatrix}
                    -\sqrt{2}-\frac{\sqrt{2}}{n-2}  \\
                     \sqrt{2}+\frac{\sqrt{2}}{(n-2)}  \\
                   0_{\frac{n}{2}-2}
                \end{bmatrix}+\begin{bmatrix}
                    -\frac{\sqrt{2}}{n-2}  \\
                     \frac{\sqrt{2}}{n-2}  \\
                     0_{\frac{n}{2}-2}  \\
                \end{bmatrix}  \\
                \begin{bmatrix}
                   -\frac{2}{n-2}  \\
                    \frac{2}{n-2}  \\
                    0_{\frac{n}{2}-2}  \\
                \end{bmatrix}+\begin{bmatrix}
                    -1  \\
                     1  \\
                     0_{\frac{n}{2}-2}
                \end{bmatrix}
            \end{bmatrix}$=\\$\begin{bmatrix}
                -\sqrt{2}-\frac{\sqrt{2}}{n-2}-\frac{\sqrt{2}}{n-2}  \\
                 \sqrt{2}+\frac{\sqrt{2}}{n-2}+\frac{\sqrt{2}}{n-2}  \\
                 0_{\frac{n}{2}-2}  \\
                 -\frac{2}{n-2}-1  \\
                  \frac{2}{n-2}+1  \\
                  0_{\frac{n}{2}-2}  
            \end{bmatrix}$=$\begin{bmatrix}
                \frac{-(n-2)\sqrt{2}-2\sqrt{2}}{n-2}  \\
                \frac{(n-2)\sqrt{2}+2\sqrt{2}}{n-2}  \\
                0_{\frac{n}{2}-2}  \\
                \frac{-2-(n-2)}{n-2}  \\
                \frac{2+n-2}{n-2}  \\
                0_{\frac{n}{2}-2}  
            \end{bmatrix}$=$\begin{bmatrix}
                \frac{-n\sqrt{2}}{n-2}  \\
                \frac{n\sqrt{2}}{n-2}  \\
                0_{\frac{n}{2}-2}  \\
                -\frac{n}{n-2}  \\
                 \frac{n}{n-2}  \\
                 0_{\frac{n}{2}-2}                
            \end{bmatrix}$=$\frac{n}{n-2}\begin{bmatrix}
                -\sqrt{2}  \\
                 \sqrt{2}  \\
                 0_{\frac{n}{2}-2}  \\
                 -1  \\
                  1  \\
                  0_{\frac{n}{2}-2}
            \end{bmatrix}$=$\frac{n}{n-2}x$.  \\
            For similar vectors obtained by swapping the position of $\sqrt{2}$ and the concurrent 1, we see that $\frac{n}{n-2}$ is present in the spectrum with a multiplicity $\frac{n-2}{2}$. Now we see that there is an eigenvalue with multiplicity 1 that is still unknown, which we will compute using the trace argument. We let the unknown eigenvalue be $\lambda_j$ and we know that $tr(G^c)=n$. Now, $tr(G^c)$=$\sum^n_{1}\lambda_i$=$\frac{n(n-2)}{2(n-2)}+\frac{(n-3)(n-2)}{2(n-2)}+\lambda_j$. Then we have 
            $n=\frac{n(n-2)}{2(n-2)}+\frac{(n-3)(n-2)}{2(n-2)}+\lambda_j$. 
            Then, $n=\frac{n}{2}+\frac{n-3}{2}+\lambda_j$=$\frac{2n-3}{2}+\lambda_j$. This gives $\lambda_j=n-\frac{2n-3}{2}$=$\frac{3}{2}$. As a result, we see that $\frac{3}{2}$ is present in the spectrum of $G^c$ with multiplicity 1. 
\end{proof}





\bibliographystyle{plain}
\bibliography{sample}

\end{document}